\newtheorem{lemma}{Lemma}[section]
\newtheorem{proposition}{Proposition}[section]
\newtheorem{theorem}{Theorem}[section]
\newtheorem{remark}{Remark}[section]
\numberwithin{equation}{section}
\newcommand{\cosuff}{\mathcal{Q}}
\newcommand{\mrcv}{\mathop{\hat{\sigma}^2_{\mathrm{mrcv}}}}
\newcommand{\rcv}{\mathop{\hat{\sigma}^2_{\mathrm{rcv}}}}
\newcommand{\pr}{\mathop{\mathrm{pr}}}
\newcommand*\bigcdot{\mathpalette\bigcdot@{.5}}
\newcommand*\bigcdot@[2]{\mathbin{\vcenter{\hbox{\scalebox{#2}{$\m@th#1\bullet$}}}}}
\newcommand{\EE}{\mathbb{E}}
\newcommand{\RR}{\mathbb{R}}
\newcommand{\T}{{\mathsmaller {\rm T}}}
\def\vhalf{\textstyle{\frac 1 2}}
\author{Heather Battey}
\address{Heather Battey, Yanbo Tang: Department of Mathematics, Imperial College London, 180 Queen's Gate, London, SW7 2AZ, UK}	\email{h.battey@imperial.ac.uk, yanbo.tang@imperial.ac.uk}
\author{Daniel Garc{\'i}a Rasines}
\address{Daniel Garc{\'i}a Rasines: Colegio Universitario de Estudios Financieros, Calle Almansa 101, Madrid 28040, Spain.}
\email{daniel.garciarasines@cunef.edu}
\author{Yanbo Tang}
\begin{document}

	\title[]{Post-reduction inference for confidence sets of models}
	
\begin{abstract}
	Sparsity in a regression context makes the model itself an object of interest, pointing to a confidence set of models as the appropriate presentation of evidence. A difficulty in areas such as genomics, where the number of candidate variables is vast, arises from the need for preliminary reduction prior to the assessment of models. The present paper considers a resolution using inferential separations fundamental to the Fisherian approach to conditional inference, namely, the sufficiency/co-sufficiency separation, and the ancillary/co-ancillary separation. The advantage of these separations is that no direction for departure from any hypothesised model is needed, avoiding issues that would otherwise arise from using the same data for reduction and for model assessment. In idealised cases with no nuisance parameters, the separations extract all the information in the data solely for the purpose for which it is useful, without loss or redundancy. The extent to which estimation of nuisance parameters affects the idealised information extraction is illustrated in detail for the normal-theory linear regression model, extending immediately to a log-normal accelerated-life model for time-to-event outcomes. This idealised analysis provides insight into when sample-splitting is likely to perform as well as, or better than, the co-sufficient or ancillary tests, and when it may be unreliable. The considerations involved in extending the detailed implementation to canonical exponential-family and more general regression models are briefly discussed. As part of the analysis for the Gaussian model, we introduce a modified version of the refitted cross-validation estimator of Fan et al. (2012), whose distribution theory is tractable in the appropriate conditional sense.

		\medskip
		
		\emph{Some key words}: Ancillarity; Co-sufficient manifold; Inferential separations; Information geometry; Model uncertainty; Regression; Selective inference; Sparsity.
	\end{abstract}		
	
	\maketitle

	\section{Introduction}\label{secIntro}

For regression problems with many potential explanatory variables, Cox (1968) noted that multiple low-dimensional combinations of variables are often compatible with the data. An arbitrary choice between well-fitting models, while reasonable for prediction, will typically be misleading for scientific understanding, as different models correspond to different scientific explanations. 
This perspective, which was emphasised repeatedly over many years (e.g.~Cox, 1995; Cox and Snell, 1974, 1989), has acquired renewed importance in the light of a multitude of modern areas, notably genomics, in which the number of variables for consideration is vast. Routine practice is to report a single model identified by a variable-selection algorithm, the most widely used being the lasso (Tibshirani, 1996). The recommendation of Cox and Battey (2017), by constrast, was to report all models that are compatible with the data at a chosen significance level, i.e.~a confidence set of models. Some theoretical considerations associated with this proposal under a high-dimensional regime were developed by Battey and Cox (2018) and Lewis and Battey (2025). While there are several papers in the epidemiology literature employing the reduction strategy of Cox and Battey (2017), the more important notion of a confidence set of models is yet to gain traction, although some isolated recent papers have pointed to a need to acknowledge model uncertainty (see \S \ref{secModelUnc}). For some epidemiological examples where sparsity-induced uncertainty has been acknowledged in the form of confidence sets of models, see Kartsonaki \emph{et al.~}(2022) and Prosser \emph{et al.~}(2026).

A set of all sparse models compatible with the data, although superficially less appealing than a single one, conveys more honestly the information in the data. In principle, from the sets of compatible models identified, the conclusions might be narrowed down through a carefully designed experiment, or by way of the contradiction argument laid out in a particular context by Battey and Reid (2023, section 5). Cox and Battey (2017) discussed how one might extract compact messages from large confidence sets of models. Specifically, among the multitude of models reported in their supplementary material, two variables, call them $v_1$ and $v_2$, are present in 96\% and 94\% of cases. In 78\% of the models in which $v_2$ is not present, another variable, $v_3$, is present in its place, and only 1\% of models include neither $v_2$ nor $v_3$.

Suppose that a regression model in $p$ variables is correctly specified but over-conditioned, in the sense that the vector of regression coefficients contains many zero entries, the positions of the non-zeros being unknown. In the remainder of the paper, we therefore equate the specification of a model with a set of variable indices and write  $\mathcal{E}^*\subset [p]$ for the sparse model in $s^*<p$ variables that generated the data. When $p$ is not so large as to make the procedure practically infeasible or theoretically unsound, a simple way of constructing a confidence set of models is via a likelihood-ratio test of every low-dimensional subset of variables $\mathcal{E}_m\subset [p]:=\{1,\ldots,p\}$ against the encompassing model $[p]$. All models $\mathcal{E}_m$, $m=1,\ldots,2^p$ not rejected at level $\alpha$ comprise the confidence set of models $\mathcal{M}$, and this, by standard arguments, contains $\mathcal{E}^*$ with asymptotic probability $1-\alpha$ in hypothetical repeated use. It is sensible in practice to restrict attention only to those models of reasonable size, substantially reducing the number of assessments to be made. 

For the genomics applications we have in mind, $p$ is typically of the order of several thousand, while the number $n$ of individuals is usually in the hundreds or even dozens. The above  construction of $\mathcal{M}$ is therefore infeasible without preliminary reduction from $[p]$ to an encompassing model, $\hat{\mathcal{E}}$ say, of more manageable size. The circumflex emphasises that $\hat{\mathcal{E}}$ is constructed from the data; it is not intended as an estimator of $\mathcal{E}^*$, a much smaller subset. Any reasonable variable screening procedure can be used for the construction of $\hat{\mathcal{E}}$; see Lewis and Battey (2025) for a theoretical discussion of the implications of that choice when the ultimate objective is the construction of $\mathcal{M}$.

The key point as far as the present work is concerned is that, since the comprehensive model $\hat{\mathcal{E}}$ is selected in the light of the data, it fits the observed data better than an arbitrary model of the same size encompassing the one to be tested. For this reason, a likelihood-ratio test of $\mathcal{E}_m \subset \hat{\mathcal{E}}$ against $\hat{\mathcal{E}}$ typically rejects too often in hypothetical repeated use, so that the resulting model confidence set $\mathcal{M}$ has a lower coverage probability than suggested by the nominal level of the test. The problem is most pronounced in the context of logistic regression: in high dimensions there are typically many low-dimensional separating hyperplanes among the candidate variables that perfectly separate cases from non-cases in the observed sample. If just one such set was retained through reduction, a logistic regression fitted to $\hat{\mathcal{E}}$ would have perfect fit, and a likelihood ratio test would be unable to detect any $\mathcal{E}^*$ that did not also form a separating hyperplane in the same set of data.

Battey and Cox (2018) used sample splitting as a simple device for restoring the nominal coverage, although it was noted that a potentially better approach would use a minimal sufficiency separation to assess model adequacy (Barndorff-Nielsen and Cox, 1994), bypassing the likelihood ratio test and obviating sample splitting. Essentially the same inferential separation was used by Engen and Lilleg{\aa}rd (1997), Lindqvist et al.~(2003), Lindqvist and Taraldsen (2005), Lockhart et al.~(2007), and Barber and Janson (2022) to assess model fit, but not in the context of the post-reduction inference considered here, where the case for such separations over likelihood-ratio or other directed tests is much stronger. The approaches taken in the above references are based on Monte Carlo algorithms to approximately sample from the appropriate subset of the sample space after suitable conditioning. Markov Chain Monte Carlo methods are too computationally expensive for present purposes, as construction of $\mathcal{M}$ entails assessing all low-dimensional subsets of $\hat{\mathcal{E}}$ for their compatibility with the data, a combinatorially large problem.

The principles developed in the present paper, specifically for the purpose of confidence sets of sparse regression models, are based on the sufficiency/co-sufficiency separation, and the ancillary/co-ancillary separation fundamental to the Fisherian approach to conditional inference and reviewed in Chapters 2.3 and 2.5 of Barndorff-Nielsen and Cox (1994). In the absence of a proposed direction of departure from the postulated model, a combination of the ancillary and co-sufficient components in principle achieves the maximal extraction of information relevant for the assessment of model adequacy, having extracted the information relevant for estimation and reduction. In practice there is considerable flexibility in how such information is used, the most na{\"i}ve approaches being ineffectual for detecting relevant departures. 

A key question is over the efficacy of sample splitting relative to the more sophisticated approaches developed here. In simpler contexts, methods powerful against specific alternatives are often valuable in small samples. This raises the possibility that using a large portion of the data for reduction and a small portion for model assessment may be as or more effective than the inferential separations considered here, provided that the test for model assessment specifies a direction for departure from the null, and provided that the sample size for model assessment is sufficiently large that any asymptotic distributions hold to an adequate order of approximation. In the context of inference on the a collection of means, selected on the basis of data from normal distributions, Cox (1975) showed via a theoretical analysis that the efficiency of sample splitting relative to an exact conditional approach is surprisingly high. In the present context, there is a clearer separation of information in the data relevant to the different phases of analysis, so a greater efficiency gain might be expected, although this has to be counterbalanced by the lack of directionality for the full-sample test. These considerations are probed by simulation in \S \ref{secSim} for the normal-theory linear regression model. The normal-theory setting is one in which sample splitting has a relatively high efficiency due to the availability of the $F$-test for assessing model adequacy, whose distribution theory is exact and therefore applies at small sample sizes.

\section{Related literature and summary of contributions}\label{secLiterature}

\subsection{Post-selection versus post-reduction inference}\label{secPostSelec}

The only commonality between the post-reduction inference problem considered here and the more familiar post-selection inference problem of e.g.~Lee \emph{et al.}~(2016), Fithian \emph{et al.}~(2017) is that an uncritical re-use of the data leads to erroneous calibration. The most important conceptual difference from the post-selection inference literature is that our inferential goal, the true model, is not data-dependent, as the reduction phase serves only to focus the analysis, and does not dictate the object of inference. Although the analysis is performed under the assumption that $\{\mathcal{E}^* \subset \hat{\mathcal{E}}\}$, conditioning on this event barely affects the distribution of test statistics for model assessment, for the latter satisfies, by construction, $\text{pr}(\mathcal{E}^* \subset \hat{\mathcal{E}})\approx 1$. 

\subsection{Model uncertainty}\label{secModelUnc}

Several other formulations have as their goal the broad objective of reflecting model uncertainty. Most obviously, Bayesian approaches to model selection give rise to Bayesian credible sets of models, whose interpretations are different. In a low-dimensional context, Hansen et al.~(2011) proposed something called a model confidence set, but which does not have the usual properties associated with confidence sets. In particular, Hansen et al.~(2011) consider the relative explanatory power of pairs of models among an initial candidate set, leading to specification of a reduced set of models with equally good or equally poor fit. By contrast, if no sparse model is compatible with the data, a confidence set of models in the sense of Cox and Battey (2017) would be empty. Lei (2020) proposes an approach based on cross-validation that returns a set of well-fitting candidate models and has a coverage probability guarantee for the model that is optimal in a predictive sense. The definition used by Ferrari and Yang (2015) is the same as ours; since they consider a low-dimensional setting, there are no issues of post-reduction inference.  

Breiman (2001) referred to the existence of many alternative predictive models as the \emph{Rashomon effect}, and used this to challenge the relevance of statistical models, a view that we do not share. See Rudin (2025) for additional perspectives. 

A natural proposal for a confidence set of models would involve applying the lasso to bootstrapped samples. This, however, does not have the required coverage properties, as noise variables appear earlier on the lasso path than the last signal variable with high probability (Su \emph{et al.}, 2017; Su, 2018).

\subsection{Summary of contributions}\label{secContributions}

The idea of using the sufficiency/co-sufficiency separation for the avoidance of post-reduction miscalibration of confidence sets of regression models was mentioned briefly elsewhere (Battey, 2022; Battey and Cox 2022). The main contribution of the present paper is to make the relevant inferential separations explicit for regression models, to provide geometric insight into the nature of the inferential problem, and to embed into this framework a generalisation of the randomised inference strategy of Rasines and Young (2023). The work also points to more general insights into the foundations of highly parametrised models, which will be developed elsewhere.

\section{Assessment of model adequacy}\label{secModelSep}

\subsection{Introduction}\label{secPreambleSep}

The discussion to be presented in this section is, at this point, disconnected from the problem outlined in the introduction. The connection will be established in \S \ref{secOperational} after the presentation of some explicit calculations for the Gaussian linear model in \S \ref{secGL}.

\subsection{The co-sufficient manifold}\label{secCosuff}

The role of sufficiency in the assessment of model adequacy, implicit in R. A. Fisher's work, is more easily approached via Barndorff-Nielsen and Cox (1994, p.~29). When the ideas can be operationalised, there are no difficulties associated with a double use of the data for model assessment and parametric inference. For notational simplicity throughout the present \S \ref{secModelSep}, we drop the subscript $m$ indexing the particular model under consideration. The ideas are at this point detached from the specifics of confidence sets of models and follow the geometric exposition of Battey (2024). 

Consider an arbitrary model $\mathcal{E}$, provisionally assumed to contain the true unknown outcome distribution, and having an associated parameter vector $\theta$. The sufficient statistic $S=s(Y)$ for $\theta$ is a function of $Y=(Y_1,\ldots,Y_n)$ with observed value $s^o=s(y^o)\in \RR^d$, where $y^o\in\RR^n$ is the observed value of $Y$. For $n$ independent draws from the same distribution in model $\mathcal{E}$, the joint density or mass function at $y=(y_1,\ldots,y_n)$ separates as
\begin{equation}\label{eqFactorisationThm}
	f_Y(y;\theta)=\prod_{i=1}^n f_{Y_i}(y_i;\theta) = g(s(y);\theta)\prod_{i=1}^n h(y_i)
\end{equation}
by the factorisation theorem (e.g.~Barndorff-Nielsen and Cox, 1994, ch.~2.3). An important implication of \eqref{eqFactorisationThm}, provided that the dimension $d$ of the minimal sufficient statistic $S$ is smaller than $n$, is that there is information in $Y$ not used for inference on $\theta$ and therefore available for assessment of the model.

It is convenient to write the information in $Y$ as $(S,Q(S))$ where, in this notional idealised separation, $Q(S)$ is not a function of $S$ in the conventional sense, and typically does not have an analytic form in terms of the observations; rather $Q(s^o)$ has the conditional distribution of $Y$, or some one-to-one transformation of $Y$, given $S=s^o$. The realisation $s^o$ fixes the co-sufficient manifold 
\begin{equation}\label{eqCoSuffMan}
	\cosuff(s^o)=\{y\in\RR^n: s(y)=s^o\}\subset \RR^n,
\end{equation}
leaving $n-d$ degrees of freedom for variation of $y$ consistent with the constraint $s(y)=s^o$, i.e., $\cosuff(s^o)$ is a manifold of dimension $n-d$ embedded in $\RR^n$. Once $s^o$ is observed, the notional random variable $Q(s^o)$ captures the randomness in $Y$ after conditioning on $S=s^o$ and is constrained to $\cosuff(s^o)$. The implicit general definition of $Q(s^o)$ is
\begin{equation}\label{eqCosuffStat}
	\pr(Q(s^o)\in \mathcal{A})=\pr(Y\in \mathcal{A}|S=s^o),
\end{equation}
where $\mathcal{A}\subseteq \cosuff(s^o)$ by definition.

Barndorff-Nielsen and Cox (1994, p.~29) encourage an understanding of the separation \eqref{eqFactorisationThm} for the assessment of model adequacy by hypothetically treating the data as arising in two stages: in the first stage we learn that $S=s^o$ which is used for inference on $\theta$. In the second stage we learn that $Y=y^o$, which is effectively one draw from the conditional distribution of $Y$ given $S=s^o$. If $y^o$ is extreme when calibrated against this conditional distribution, that casts doubt on the adequacy of the model. This is the usual analogue of proof by contradiction underpinning the core Fisherian hypothesis tests. By calibrating $y^o$ against the conditional distribution under the model hypothesis of $Y$ given $S=s^o$, we avoid the need to compare a smaller model against a larger one, thereby avoiding the post-reduction miscalibration suffered by a likelihood ratio test against a data-dependent encompassing model, as described in \S \ref{secIntro}.

For some models, notably the linear regression model and canonical generalised linear models, the co-sufficient manifold can be specified explicitly. When this is not the case, it is natural to seek local approximations to the manifold of interest, and since $s(y^o)=s^o$ by construction, it is reasonable to consider the tangent plane to $\cosuff(s^o)$ locally at $y^o$. Some discussion of this is provided in \S \ref{secTangent}. We focus in the present paper on situations for which the co-sufficient manifold is readily deduced.

\subsection{The ancillary separation}\label{secAncill}

A second approach to the assessment of model adequacy is available when the dimension $d$ of $S$ is larger than $d_\theta$, the dimension of $\theta$. Write $S\cong (S_1,A)$, where $S_1$ estimates $\theta$ and $A$ is an ancillary statistic. Ideally $A$ is ancillary in the strongest sense, having a distribution that does not depend on $\theta$, which implies the factorisation
\begin{equation}\label{eqAncillFact}
	f_{S}(s;\theta)= f_{S\mid A}(s \mid a;\theta)f_A(a)
\end{equation}
More generally, definitions of ancillarity in the presence of nuisance parameters are collated on page 38 of Barndorff-Nielsen and Cox (1994). Since the information relevant for inference for $\theta$ is contained in the conditional distribution of $S$ given $A=a^o$, the marginal distribution of $A$ is available for model checking. 

For the maximal amount of information to be extracted for model assessment, the random variables $A$ and $Q(s^o)$ should in principle both be sought. Whether this leads to more powerful inference in practice than an approach based on sample splitting, or based on one or other of $A$ and $Q(s^o)$ individually, is unclear from this abstract discussion because of the possibility of using directed tests when the sample is partitioned, and because the detailed implementation may vary according to the structure of the regression model under consideration. Section \ref{secGL} gives analytic calculations and methodology for the Gaussian linear regression model. 

\section{Distributional calculations for Gaussian linear regression}\label{secGL}

\subsection{Distribution on the co-sufficient manifold}\label{secExplicitGaussian}

Consider a linear regression model in which, under the postulated model, $Y\in \mathbb{R}^n$ is normal with mean $\mu = X\theta \in \mathcal{X}$ and covariance matrix  $\Sigma = \sigma^2 I_n$. The space $\mathcal{X}$ is the $d_\theta$-dimensional linear space spanned by the $d_\theta$ columns of $X$, with $(n-d_\theta)$-dimensional orthogonal complement
\[
\mathcal{X}^\perp = \{v\in \mathbb{R}^n: v^\T x=0,\, x\in \mathcal{X}\}\subset \mathbb{R}^n,
\] 
so that $\mathcal{X}\oplus \mathcal{X}^\perp = \RR^n$.

A minimal sufficient statistic when $\sigma^2$ is unknown is either $S=(X^\T Y,\hat{\varepsilon}^\T\hat{\varepsilon})$ if the covariate matrix $X$ is treated as fixed, or $S=(\hat{\theta},\hat{\varepsilon}^\T\hat{\varepsilon}, X^{\T} X)$ if $X$ is treated as random, where $\hat\theta$ is the ordinary least squares estimator of $\theta$ (also the maximum likelihood estimator) and $\hat{\varepsilon}^\T\hat{\varepsilon}$ is the residual sum of squares. Here we treat $X$ as fixed so that $d=d_\theta+1$. The information in the data that is not contained in the sufficient statistic can be extracted through orthogonal projection of $Y$ onto $\mathcal{X}^\perp$, followed by an additional conditioning on the realised value of $\hat{\varepsilon}^\T\hat{\varepsilon}$. The first conditioning implicit in the projection restricts to the $(n-d_\theta)$-dimensional linear subspace $\mathcal{X}^\perp$, while the second conditioning restricts the lengths of permissible vectors, so that the co-sufficient manifold is a hypersphere embedded in $\mathcal{X}^\perp$, which can also be viewed as an $(n-d)$-dimensional sphere embedded in $\RR^n$. Depending on context, we will refer to the co-sufficient manifold interchangeably as a hypersphere in $\mathcal{X}^\perp$ and an $(n-d)$-dimensional sphere in $\RR^n$, the term \emph{hypersphere} meaning that there is one dimension missing from the ambient space. Having restricted to $\mathcal{X}^\perp$, we can for most purposes think of $\cosuff(s^o)$ as being a unit hypersphere in $\RR^{n-d_\theta}$, except that the coordinates within the larger space $\RR^n$ are specified by the orientation of $\mathcal{X}^\perp$. The distribution on the sphere is uniform under correct specification of the model, as shown by the following calculation, analogous to ones used in the invariant testing literature (e.g.~Kariya, 1980; King, 1980). 

The subspaces $\mathcal{X}$ and $\mathcal{X}^\perp$ are groups under addition, which implies that for any matrix $U$ with column span $\text{Im}(U)= \mathcal{X}^\perp$, the statistic
\begin{equation}\label{eqInvariant}
	q(y,U)= U^\T y /\|U^\T y\| = U^\T y/(y^\T U U^\T y)^{1/2}.
\end{equation}
is invariant under the group with action $g(a,b):y\mapsto ay + b$, where $a>0$, $b\in \mathcal{X}$. The lowest-dimensional specification of $q(y,U)$ containing the same information is obtained by taking the columns of $U$ to be a linear basis for $\mathcal{X}^\perp$. The natural choice of $U$ is the $n\times (n-d_\theta)$ matrix of orthonormal eigenvectors corresponding to the unit eigenvalues of the standard orthogonal projection $M=I_{n} - X(X^\T X)^{-1} X^\T$, the remaining eigenvalues being zero. It follows that $UU^\T = M$ and $U^\T U = I_{n-d_\theta}$. 

Consider $W=w(Y)=U^\T Y$, which is normally distributed with mean zero and covariance $U^\T \Sigma U=\sigma^2 I_{n-d_\theta}$ under the postulated model. In the transformation $w\mapsto (w/\|w\|,w^\T w) =: (q, r^2)$, the volume element transforms as
\begin{equation}\label{eqVol}
	dw_1\cdots dw_{n-d_\theta} = \frac{(r^2)^{\tfrac{n-d_\theta}{2}-1}dr^2 \textstyle{\prod_{j=1}^{n-d_\theta-1}}dq_{j}}{2 \bigl(1-\textstyle{\sum_{j-1}^{n-d_\theta-1}}q_{j}^2\bigr)^{1/2}} =  \vhalf (r^2)^{\tfrac{n-d_\theta}{2}-1}dr^2 d\text{Vol}^{n-d}(q),
\end{equation}
where 
\[
d\text{Vol}^{n-d}(q) = d\text{Vol}^{n-d_\theta - 1}(q) =\frac{\textstyle{\prod_{j=1}^{n-d_\theta-1}}dq_{j}}{\bigl(1-\textstyle{\sum_{j-1}^{n-d_\theta-1}}q_{j}^2\bigr)^{1/2}}
\]
is the natural volume element on the surface of the hypersphere in $\RR^{n-d_\theta}$. The joint density function is therefore
\[
\frac{(r^2)^{(n-d_\theta)/2 - 1}\exp\Bigl(-\frac{r^2}{2\sigma^2} q^\T q\Bigr)}{2 (2\pi\sigma^2)^{(n-d_\theta)/2}}dr^2 d\text{Vol}^{n-d}(q),
\]
from which the marginal density function of $Q$ is obtained by integration using a change of variables from $r^2$ to $r^2 q^\T q/2\sigma^2$ as 
\begin{equation}\label{eqQ}
	\frac{\Gamma\bigl(\frac{n-d_\theta}{2}\bigr)(q^\T  q)^{-(n-d_\theta)/2}}{2\pi^{(n-d_\theta)/2}}d\text{Vol}^{n-d}(q),
\end{equation}
showing that $Q$ is uniformly distributed on the unit hypersphere in $\mathcal{X}^\perp$. Since, under the assumed mean model $\mu \in \mathcal{X}$, $R^2=W^\T W$ is distributed as $\sigma^2$ times a standard $\chi^2$ random variable with $n-d_\theta$ degrees of freedom, independently of $Q$, the conditional density function of $Y$ given $R^2=r^2$ is equal to the marginal density function of $Q$, which is explicitly defined in this case.

This calculation can be generalised to allow for variance components in the form $\Sigma = \sigma^2(I_n + V(\alpha))$ where $V(\alpha)$ is a known matrix function of a vector parameter $\alpha=(\alpha_1, \ldots, \alpha_s)^\T$ with $V(0) = 0$. In this case, the appropriate analogue of \eqref{eqQ} is given by equation (10) of Battey and McCullagh (2024). Alternatively, a transformation to uniformity on the unit hypersphere in $\mathcal{X}^\perp$ is $q\mapsto A_\alpha^{-1/2}q/(q^\T A_{\alpha}^{-1} q)^{1/2}=:Q'$, where $\sigma^2 A_\alpha=U^\T \Sigma U$. 

There are some fundamental and operational difficulties in applying the model assessment strategies of \S \ref{secModelSep} directly. These are discussed in \S \ref{secOperational} where we provide a more elaborate blueprint for implementation of this broad Fisherian program.

\subsection{Distribution of the ancillary component}\label{secElliptic}

Notice that $R^2$ is the residual sum of squares $\hat\varepsilon^\T \hat\varepsilon$, which is part of the minimal sufficient statistic $S$ and has a distribution independent of $\theta$. It is ancillary for $\theta$ according to the more general definitions on page 38 of Barndorff-Nielsen and Cox (1994) and is therefore also available for the assessment of model adequacy via a version of \eqref{eqAncillFact} allowing for nuisance parameters.

Suppose initially that $\sigma^2$ and the mean subspace $\mathcal{X}$ are known, then $R^2$ carries all the information relevant for distinguishing, say, normal errors from Student-$t$ errors. This follows from the properties of elliptically symmetric distributions, for which the generic form of the density function is 
\[
f_Y(y;\mu, \Sigma, g) \propto |\Sigma|^{-1/2}g\bigl((y-\mu)^\T \Sigma^{-1}(y-\mu) \bigr),
\]
the generator $g:\RR^+ \rightarrow \RR^+$ identifying the member of the elliptically symmetric class. To provide insight, and a basis for some arguments later in the paper, we briefly consider this setting.

Let $U$ be as defined in \S \ref{secExplicitGaussian}. Then $W=U^\T Y$ is a member of the elliptically symmetric class of distributions in $n-d_\theta$ dimensions, with zero mean and covariance $\Sigma_W=\sigma^2 I_{n-d_\theta} = U^\T \Sigma U$ (e.g.~Muirhead, 1982), and therefore has a stochastic representation (Cambanis \emph{et al.}, 1981, Theorem 1) $W\stackrel{d}{=}Z\Sigma_W^{1/2} Q'$, where $\stackrel{d}{=}$ denotes equality in distribution and $Q'$ is uniformly distributed on the surface of the unit hypersphere in $\RR^{n-d_\theta}$. The distribution of $Z$ with density function $f_Z$ determines the member of the elliptic class and is related to the generator $g$ for the standardised variable $\Sigma^{-1/2}_W W$ as (Fang \emph{et al.}, 1990)
\[
f_z(z) =\frac{2 \pi^{(n-d_\theta)/2}}{\Gamma((n-d_\theta)/2)}z^{n-d_\theta-1}g(z^2). 
\]
In the Gaussian case of \S \ref{secExplicitGaussian}, the random variable $R$ satisfies $R\stackrel{d}{=}\sigma Z$ where $Z$ is distributed as the square root of a $\chi^2$ random variable with $n-d_\theta$ degrees of freedom under the null model $\mu\in \mathcal{X}$. As an example, for the Student-$t$ family of errors with $\nu$ degrees of freedom, the distribution of $Z$ is (Kano, 1994) $Z\stackrel{d}{=}\surd(\chi^2_{n-d_\theta}/V)$, where $V\stackrel{d}{=}\chi^2_{\nu}/\nu$ is independent of the numerator.

The random variable $Z^2$ thus carries information on the distribution of the errors when the mean model is known. Conversely, $Z^2=R^2/\sigma^2$ contains valuable information for assessment of the mean model under the assumption that the errors are Gaussian. 

\section{An operational blueprint for Gaussian linear regression}\label{secOperational}

\subsection{Co-sufficiency and confidence sets of models: difficulties}

Sections \ref{secModelSep} and \ref{secGL} covered, respectively, general principles and the corresponding calculations of the normal-theory linear model. These sections were detached from the problem of post-reducton inference for confidence sets of models introduced in \S \ref{secIntro}, to which we now return. 

From an encompassing model $\hat{\mathcal{E}}$ computed using all available data, we consider all submodels $\mathcal{E}_m$ of reasonable size, seeking to assess each one for compatibility with the data. Thus for each submodel $\mathcal{E}_m$, we can construct one realisation $q^o$ of the random variable $Q$ from \S \ref{secExplicitGaussian}. The dependence of $Q$ on the model $\mathcal{E}_m$ is not made explicit in the notation; nor is that of the corresponding null space $\mathcal{X}^\perp$ or the unit hypersphere $\mathcal{Q}(s^o)$ on which the distribution of $Q$ is supported. Under the correct model $\mathcal{E}^*$, the outcome $Y$ projected onto the corresponding $\mathcal{Q}(s^o)$ is uniformly distributed on its surface.  

The difficulty is that a test based on $q^o$ alone has no power in an assessment of model adequacy, as any single unit vectors on $\mathcal{Q}(s^o)$ is compatible with uniformity of distribution. Although power in certain directions could be restored by consideration of alternative models for comparison, this leads to a test statistic essentially equivalent to the $F$-test, i.e.~the likelihood ratio test for Gaussian models, and therefore back to the initial difficulties discussed in \S \ref{secIntro}. Note, however, that even with two replicate observations of the random variable $Q$, there would be a degree of power against uniformity of distribution on $\cosuff(s^o)$, this tending to increase with the number of replicate observations. 

This observation points to an initial suggestion for inference based on co-sufficient information partitioning. Information partitioning, while bearing some superficial similarity to sample splitting, differs in that no observational units are discarded in any of the inferential steps. Since all the data are used for both aspects of inference, reduction and model assessment, no information is relinquished in the reduction to $\hat{\mathcal{E}}$, and virtually no co-sufficient information is lost in the assessment of model adequacy. As it turns out, the simplest partitioning approach, while conceptually revealing, is ineffectual for reasons given in Appendix \ref{secPartitioning}. Instead, we propose in \S \ref{secRandomisation} a more elaborate reformulation, adapting and extending the approach of Rasines and Young (2023), introduced in that work for a different purpose.

\subsection{Co-sufficient information partitioning through randomisation} \label{secRandomisation}

In the vein of Rasines and Young (2023), we first generate independent pseudo-replicates of $Y$ with common distribution $N(\mu, k\sigma^2 I_n)$. The idealised but infeasible randomisation procedure is as follows. Let $L$ be an $n\times (k-1)$ matrix, independent of $Y$, with independent $N(0, 1)$ entries, and define the $n\times k$-dimensional matrix of pseudo-replicates as
\begin{equation} \label{EQ: randomisation}
	[\tilde{Y}^{(1)} \cdots \tilde{Y}^{(k)}] = [Y \; L] \Gamma, 
\end{equation}
where $\Gamma$ is a $k\times k$ matrix of coefficients tuned to obtain the desired distribution as
\begin{equation}
	\Gamma = \begin{bmatrix}
		1 & 1 & 1 & \cdots & 1 \\
		a_1 & -b_1 & -b_1 &  \cdots & -b_1 \\
		0 & a_2 & -b_2 &  \cdots & -b_2 \\
		\vdots & \vdots & \vdots & \vdots & \vdots \\
		0 & 0 & 0 & a_{k-1} & -b_{k-1}
	\end{bmatrix}.
\end{equation}
Here $\{a_i, b_i\colon i = 1, \ldots, k-1\}$ are deterministic coefficients that can be computed recursively as: $a_i = \sigma\tilde a_i$ and $b_i = \sigma\tilde b_i$, where
\begin{align}
	\begin{split}\label{eqRecursive}
		\tilde a_1 =& \; \sqrt{k-1}; \hspace{3.2cm} \tilde b_1 = \frac{1}{\tilde a_1}; \\
		\tilde a_{i+1} =& \; \Bigl(k - 1 -  \sum_{j = 1}^{i} \tilde b_j^2\Bigr)^{1/2}; \hspace{0.98cm}
		\tilde b_{i+1} = \frac{1 + \sum_{j = 1}^{i} \tilde b_j^2}{\tilde a_{i+1}}.
	\end{split}
\end{align}
\begin{proposition}
	\label{lemmaRand}
	If $Y\sim N(\mu, \sigma^2 I_n)$, then $\tilde{Y}^{(1)}, \ldots, \tilde{Y}^{(k)}\sim N(\mu, k \sigma^2 I_n)$ independently.    
\end{proposition}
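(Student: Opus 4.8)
The plan is to exploit joint Gaussianity and argue row by row. Since $[\tilde Y^{(1)} \cdots \tilde Y^{(k)}] = [Y\; L]\Gamma$ is a fixed linear image of the Gaussian array $[Y\;L]$, the whole collection is jointly Gaussian, so it suffices to match first and second moments; mutual independence of the columns then follows from vanishing cross-covariances. I would collapse everything to a single $k\times k$ computation by noting that the rows $v_1,\ldots,v_n$ of $[Y\;L]$ are independent, with $v_t\sim N(\mu_t e_1, D)$, where $e_1$ is the first standard basis vector of $\RR^k$ and $D=\mathrm{diag}(\sigma^2,1,\ldots,1)$ records that $Y$ has variance $\sigma^2$ per coordinate while the entries of $L$ are standard normal. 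The $t$-th row of the pseudo-replicate array is $\Gamma^\T v_t$, so it is enough to prove the two identities $\Gamma^\T e_1=\mathbf{1}_k$ and $\Gamma^\T D\Gamma=k\sigma^2 I_k$: the first gives $\EE[\tilde Y^{(j)}_t]=\mu_t$ for every $j$, and the second simultaneously yields the common per-coordinate variance $k\sigma^2$ and the vanishing cross-covariances, hence independence of the columns.

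The first identity is immediate, since the top row of $\Gamma$ is $\mathbf{1}_k^\T$. For the second, write $\Gamma=\begin{bmatrix}\mathbf{1}_k^\T\\ B\end{bmatrix}$, with $B$ the $(k-1)\times k$ staircase lower block, so that $\Gamma^\T D\Gamma=\sigma^2\mathbf{1}_k\mathbf{1}_k^\T+B^\T B$. The target $\Gamma^\T D\Gamma=k\sigma^2 I_k$ is therefore equivalent to $B^\T B=k\sigma^2\bigl(I_k-k^{-1}\mathbf{1}_k\mathbf{1}_k^\T\bigr)$, i.e.\ to $k\sigma^2$ times the orthogonal projection $P$ onto the hyperplane $\mathbf{1}_k^\perp$. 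I would establish this via the rows of $B$: its $i$-th row is $(0,\ldots,0,a_i,-b_i,\ldots,-b_i)$, with $a_i$ in position $i$. It suffices to show that these $k-1$ rows (i) sum to zero entrywise, i.e.\ lie in $\mathbf{1}_k^\perp$, (ii) are mutually orthogonal, and (iii) share the common squared length $k\sigma^2$; being $k-1$ orthogonal vectors of equal squared length $k\sigma^2$ inside the $(k-1)$-dimensional space $\mathbf{1}_k^\perp$, they form a scaled orthonormal basis of it and hence force $B^\T B=k\sigma^2 P$.

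A short computation collapses (i)--(iii) to two facts about the coefficients. The row sum of the $i$-th row is $a_i-(k-i)b_i$, and the inner product of rows $i<i'$ equals $-b_i\bigl[a_{i'}-(k-i')b_{i'}\bigr]$, so (i) and (ii) both reduce to the single relation $\tilde a_i=(k-i)\tilde b_i$; meanwhile the squared length is $\sigma^2\bigl(\tilde a_i^2+(k-i)\tilde b_i^2\bigr)$, so (iii) reduces to $\tilde a_i^2+(k-i)\tilde b_i^2=k$. I expect the one genuinely non-routine step to be verifying these two relations from the recursion \eqref{eqRecursive}, which I would handle by an induction establishing the closed form $\sum_{j=1}^{i-1}\tilde b_j^2=(i-1)/(k-i+1)$ for the partial sums. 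Feeding this into the definitions of $\tilde a_i$ and $\tilde b_i$ yields $\tilde b_i^2=k/\{(k-i)(k-i+1)\}$ and $\tilde a_i=(k-i)\tilde b_i$ directly, from which both relations, and hence (i)--(iii), follow. Everything else is bookkeeping, so this inductive partial-sum identity is the crux of the argument.
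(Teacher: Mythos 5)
Your proof is correct, but it takes a genuinely different route from the paper's. The paper works column by column: writing $\tilde Y^{(i)} = Y - \sum_{j<i} b_j L_j + a_i L_i$ (with $a_k=0$), it computes $\mathrm{Cov}(\tilde Y^{(i)})$ and $\mathrm{Cov}(\tilde Y^{(i_1)},\tilde Y^{(i_2)})$ directly and cancels terms using precisely the two identities that the recursion \eqref{eqRecursive} encodes by definition, namely $\tilde a_i^2 = k-1-\sum_{j<i}\tilde b_j^2$ and $\tilde a_i \tilde b_i = 1+\sum_{j<i}\tilde b_j^2$; no induction and no closed forms are needed, because the recursion was designed exactly so that these cancellations occur. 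You instead work row by row, compressing the whole claim into the single $k\times k$ identity $\Gamma^\T D \Gamma = k\sigma^2 I_k$ with $D=\mathrm{diag}(\sigma^2,1,\ldots,1)$, and then into the geometric statement that the rows of the lower block $B$ are $k-1$ mutually orthogonal vectors of common squared length $k\sigma^2$ lying in $\mathbf{1}_k^\perp$. That is strictly stronger structural information about $\Gamma$ than the paper ever extracts, and it is not an immediate consequence of the recursion: your relation $\tilde a_i=(k-i)\tilde b_i$ is equivalent to the closed form $\sum_{j=1}^{i-1}\tilde b_j^2=(i-1)/(k-i+1)$, so the induction you identify as the crux really is unavoidable on your route (and your closed forms $\tilde b_i^2=k/\{(k-i)(k-i+1)\}$ and, implicitly, $\tilde a_i^2=k(k-i)/(k-i+1)$ check out; note also that your second relation $\tilde a_i^2+(k-i)\tilde b_i^2=k$ follows from the first together with the definitional identity $\tilde a_i^2+\tilde a_i\tilde b_i=k$). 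What your detour buys: it exhibits $\sigma^{-1}k^{-1/2}B$ as an orthonormal basis of $\mathbf{1}_k^\perp$, i.e.\ $\Gamma$ is a scaled Helmert-type matrix; it yields explicit coefficient values rather than only the two recursion identities; and the formulation $\Gamma^\T D\Gamma = k\sigma^2 I_k$ adapts mechanically to other covariance structures $D$, in the spirit of the paper's remark that the construction generalises to a known positive-definite $\Sigma$. What the paper's route buys is brevity: two short covariance computations with purely definitional substitutions.
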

A proof is provided in Appendix \ref{appLemmaRand}. The above construction can be generalised to allow for any known positive-definite covariance matrix $\Sigma$ in the distribution of $Y$ by adjusting $\Gamma$ appropriately. 

The randomisation scheme is a direct generalisation of the construction introduced by Rasines and Young (2023) in a different context. In particular, Rasines and Young (2023) decompose a random vector $Y\sim N(\mu, \sigma^2 I_n)$ into two vectors of the same size
\begin{eqnarray*}
	Y^{(1)} =& Y + \gamma \sigma L  &\sim  N(\mu, (1+\gamma^2)\sigma^2I_n), \\
	Y^{(2)} =& \;\,Y - \sigma L/\gamma &\sim  N(\mu, (1+\gamma^{-2})\sigma^2I_n), 
\end{eqnarray*}
where $L\sim N(0, 1)$ and $\gamma > 0$ calibrates the variance allocation. On applying this argument recursively $k-1$ times, we have, with $\gamma_1 = (k-1)^{1/2}$, a decomposition of $Y$ into $Y^{(1)} \sim N(\mu, k \sigma^2I_n)$ and $V_1 \sim N(\mu, (k/k-1) \sigma^2I_n)$, and with $\gamma_j = (k-j)^{1/2}$ a decomposition of $V_{j-1}$ into $Y^{(j)}\sim N(\mu, k \sigma^2 I_n)$ and $V_j \sim N(\mu, (k/k-j) \sigma^2I_n)$. The matrix $\Gamma$ emerges on writing $Y^{(i)}$ in terms of $Y$ and the Gaussian errors, and collecting coefficients. The same construction can be obtained by an application of Proposition 2 in Dharamshi et al.~(2025) but the explicit form of $\Gamma$ that achieves a decomposition into equal-variance components is new.

From the pseudo-replicates of the response vector, we obtain independent replicates of $Q$, constructed for any given model as $\tilde Q^{(i)}=U^\T\tilde Y^{(i)}/\|U^\T\tilde Y^{(i)}\|$ for $i = 1, \ldots,k$, where $U$ is that of \S \ref{secExplicitGaussian}. Under correct submodel specification, $\mu = X\theta$, these $k$ replicates are independent and uniformly distributed on the unit hypersphere in $\mathcal{X}^\perp$.

If $\sigma^2$ is known or precisely and accurately estimated, then a larger choice of $k$ is intuitively better, as this plays the role of sample size for model assessment, determining the number of observations on the sphere. Since an actionable version of the randomisation approach requires estimation of $\sigma^2$, which induces some asymptotically negligible dependence between pseudo replicates, small $k$ is better at small sample sizes, a point to which we return below.

The classical estimator, constructed from the residual sum of squares of the linear model including all covariates in the encompassing set $\hat{\mathcal{E}}$, is consistent provided that $n - \vert \hat{\mathcal{E}} \vert \to \infty$. Since $\hat{\mathcal{E}}$ tends to overfit to the data  in small samples, it typically underestimates $\sigma^2$ unless the sample size is large. Small-sample bias is reduced through the use of the refitted cross-fitting procedure of Fan \emph{et al.}~(2012), who provided statistical guarantees. Below we propose a minor adaptation of this estimator, whose distribution theory is simple modulo the necessary conditioning arguments, and convenient for the Gaussian linear model. 

The estimator described by Fan et al (2012) partitions the outcome and covariate data into two disjoint subsets, $(Y^{(1)}, X^{(1)})$ and $(Y^{(2)}, X^{(2)})$, each of size $n/2$. Fan et al.~(2012) proposed to fit a conservative variable selection procedure within each subset, such that the true set $\mathcal{E}^*$ is contained in both selected sets $\hat{\mathcal{E}}^{(1)}$ and $\hat{\mathcal{E}}^{(2)}$ with high probability. Let $\mathcal{X}^{(1)}_2$ be the space spanned by the columns of $X^{(1)}$ indexed by $\hat{\mathcal{E}}^{(2)}$ and let $\mathcal{X}^{(2)}_1$ be the space spanned by the columns of $X^{(2)}$ indexed by $\hat{\mathcal{E}}^{(1)}$. The projection matrices onto these two subspaces are written $P^{(1)}_2$ and $P^{(2)}_1$. The refitted cross-validation estimator $\hat\sigma^2_{\text{rcv}}$ of $\sigma^2$ is then given by the average of the two independent estimators 
\[
\hat{\sigma}_1^2=\frac{\|(I-P^{(1)}_2)Y^{(1)}\|_2^2}{n/2-|\hat{\mathcal{E}}^{(2)}|}, \quad \quad\hat{\sigma}_2^2=\frac{{\|(I- P^{(2)}_1)Y^{(2)}\|_2^2}}{n/2-|\hat{\mathcal{E}}^{(1)}|}.
\]
Empirically, the best approach computes this estimate, or the one below, using only $[\gamma n]$ observations for some $0.5 < \gamma < 1$, so that $n$ in the previous display is replaced by $[\gamma n]$. This further limits the dependence between $\rcv$ and $U^TY$, resulting in a better calibration of the test statistic under the null distribution. In simulations, we found that $\gamma=0.6$ was a good choice across a broad range of scenarios. 

In order to prove the distribution theory of the test statistic in Theorem \ref{corrRayleigh}, it is convenient to use a slightly modified version of the refitted cross validation estimator. Let $df_1 = n/2-|\hat{\mathcal{E}}^{(2)}|$ and $df_2 = n/2-|\hat{\mathcal{E}}^{(1)}|$ and define the modified refitted cross-validation estimator as
\begin{equation}\label{eqMRCV}
	\hat{\sigma}^2_{\text{mrcv}}:= \frac{df_1 \hat{\sigma}^2_1 + df_2 \hat{\sigma}^2_2}{df_1 + df_2}. 
\end{equation}

Proposition \ref{propMRCV} establishes a distributional approximation to the distribution of this estimator when the sizes of the compassing models, $|\hat{\mathcal{E}}^{(1)}|$ and $|\hat{\mathcal{E}}^{(2)}|$, are fixed independently of the data. Although in the case of fixed $|\hat{\mathcal{E}}^{(1)}|$ and $|\hat{\mathcal{E}}^{(2)}|$ they would typically be chosen equal so that $\mrcv$ reduces to $\rcv$, simulations (not reported) show that the distributional approximation given in Proposition \ref{propMRCV} is almost exact at moderate sample sizes even when $|\hat{\mathcal{E}}^{(1)}|$ and $|\hat{\mathcal{E}}^{(2)}|$ are data-dependent.

\begin{proposition}\label{propMRCV}
	Let $\nu = df_1 + df_2$. Let $|\hat{\mathcal{E}}^{(1)}|$ and $|\hat{\mathcal{E}}^{(2)}|$ be fixed independently of the data and satisfy $\vert\hat{\mathcal{E}}^{(1)} \vert + \vert\hat{\mathcal{E}}^{(2)} \vert= o(n)$. Assume that for any set $E$ such that $E\subseteq \{1, \ldots, p\}$ and $\vert E\vert =  \vert\hat{\mathcal{E}}^{(-j)} \vert$, $X^{(j)}_{E}$ has full column rank for $j = 1, 2$, where the notation $-j$ denotes the complementary index. Assume further that $
	\pr(\mathcal{E}^*\not\subseteq \hat{\mathcal{E}}^{(1)} \cap \hat{\mathcal{E}}^{(2)}) = O(n^{-1/2})$. Then,
	\[
	\hat{\sigma}^2
	\stackrel{d}{=}
	\sigma^2\,\frac{\chi^2_{\nu}}{\nu}
	+
	O_p( n^{-1/2}), \quad \text{as } n\to\infty,
	\]
	where $\chi^2_{\nu}$ is a chi-square random variable with $\nu$ degrees of freedom.
	
\end{proposition}

Let $\hat{Y}^{(1)}, \ldots, \hat{Y}^{(k)}$ be the $k$ columns of the $n\times k$ matrix obtained by replacing $\Gamma$ in \eqref{EQ: randomisation} by $\hat\Gamma$, the realisable version of $\Gamma$ with $\hat\sigma$ in place of $\sigma$. A recommendation is $\hat\sigma_{\text{mrcv}}$ computed on a portion $[0.6n]$ of the data. With $\hat Q^{(i)}=U^\T\hat Y^{(i)}/\|U^\T\hat Y^{(i)}\|$, Proposition \ref{thmQ} quantifies the maximum Euclidean distance between $\hat Q^{(1)},\ldots, \hat Q^{(k)}$ and the idealised versions $ \tilde Q^{(1)},\ldots, \tilde Q^{(k)}$ that treat $\sigma$ as known. The conclusion is in terms of $k$ and the error rate $a_n$ of the estimator $\hat\sigma$ of $\sigma$. It should be kept in mind that, although the notation does not reflect this, the construction of both $\hat Q^{(1)},\ldots, \hat Q^{(k)}$ and $\tilde Q^{(1)},\ldots, \tilde Q^{(k)}$ depends on a postulated model $\mathcal{E}_m$, inducing a mean subspace, $\mathcal{X}_m$ say. 

\begin{proposition}\label{thmQ}
	Let $\hat\sigma$ be an estimator satisfying $|\hat\sigma - \sigma| = O_p(a_n)$ and take $k\lesssim a_n^{-2}$. Under the null hypothesis $\mathcal{X}=\mathcal{X}_m$,
	\[ 
	\max_{i \in [k]}\|\hat Q^{(i)} - \tilde Q^{(i)}\|_{2}= O_p(a_n k^{1/2}). 
	\]
\end{proposition}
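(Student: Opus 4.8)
The plan is to reduce the statement to a scalar perturbation and then apply elementary Gaussian concentration. The crucial observation is that $\Gamma$ depends on $\sigma$ only through the scalar multipliers $a_i=\sigma\tilde a_i$ and $b_i=\sigma\tilde b_i$, whose dimensionless parts $\tilde a_i,\tilde b_i$ are functions of $k$ alone. Reading off the $i$th column of \eqref{EQ: randomisation} gives $\tilde Y^{(i)}=Y+\sigma\,\tilde v^{(i)}$, where $\tilde v^{(i)}:=\tilde a_i L_i\,\ind\{i\le k-1\}-\sum_{\ell=1}^{i-1}\tilde b_\ell L_\ell$ is a linear combination of the columns of $L$ with deterministic, $\sigma$-free coefficients. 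Replacing $\sigma$ by $\hat\sigma$ in $\Gamma$ rescales only these multipliers, so $\hat Y^{(i)}=Y+\hat\sigma\,\tilde v^{(i)}$ with the \emph{same} $\tilde v^{(i)}$. Writing $W:=U^\T Y$, $m^{(i)}:=U^\T\tilde v^{(i)}$, $\tilde W^{(i)}:=U^\T\tilde Y^{(i)}=W+\sigma m^{(i)}$ and $\hat W^{(i)}:=U^\T\hat Y^{(i)}=W+\hat\sigma m^{(i)}$, this produces the exact identity $\hat W^{(i)}-\tilde W^{(i)}=(\hat\sigma-\sigma)\,m^{(i)}$, in which the estimation error enters only as the scalar factor $\hat\sigma-\sigma$.

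Next I would pass to the sphere. For nonzero $u,v\in\RR^{n-d_\theta}$ the elementary chord bound $\|u/\|u\|-v/\|v\|\|\le 2\|u-v\|/\|v\|$ gives
\[
\|\hat Q^{(i)}-\tilde Q^{(i)}\|\le \frac{2\,\|\hat W^{(i)}-\tilde W^{(i)}\|}{\|\tilde W^{(i)}\|}=2\,|\hat\sigma-\sigma|\,\frac{\|m^{(i)}\|}{\|\tilde W^{(i)}\|}.
\]
Maximising over $i\in[k]$ and using $\max_i(x_i/y_i)\le(\max_i x_i)/(\min_i y_i)$, it suffices to show $\max_i\|m^{(i)}\|=O_p(\{k(n-d_\theta)\}^{1/2})$ and $\min_i\|\tilde W^{(i)}\|\gtrsim \sigma(n-d_\theta)^{1/2}$ with probability tending to one; the conclusion then follows on inserting $|\hat\sigma-\sigma|=O_p(a_n)$.

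For these two envelopes I would use that under the null $\mathcal X=\mathcal X_m$ we have $U^\T\mu=0$, so $W\sim N(0,\sigma^2 I_{n-d_\theta})$. As the columns of $L$ are independent $N(0,I_n)$ vectors independent of $Y$, $m^{(i)}$ is Gaussian with covariance $(\tilde a_i^2\,\ind\{i\le k-1\}+\sum_{\ell<i}\tilde b_\ell^2)I_{n-d_\theta}$, and the recursion \eqref{eqRecursive} is arranged precisely so that this coefficient equals $k-1$ for every $i$; hence $m^{(i)}\sim N(0,(k-1)I_{n-d_\theta})$ and $\tilde W^{(i)}=W+\sigma m^{(i)}\sim N(0,k\sigma^2 I_{n-d_\theta})$, consistent with Proposition \ref{lemmaRand}. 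Thus $\|m^{(i)}\|^2/(k-1)$ and $\|\tilde W^{(i)}\|^2/(k\sigma^2)$ are each $\chi^2_{n-d_\theta}$, and standard chi-squared tail bounds together with a union bound over $i\in[k]$ yield $\max_i\|m^{(i)}\|=O_p(\{k(n-d_\theta)\}^{1/2})$ and, discarding the factor $k$ in the lower bound, $\min_i\|\tilde W^{(i)}\|\ge\sigma\{(n-d_\theta)/2\}^{1/2}$ with probability tending to one. It is this conservative, $k$-free lower bound on the denominator that produces the $k^{1/2}$ in the stated rate.

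Combining the three displays gives $\max_i\|\hat Q^{(i)}-\tilde Q^{(i)}\|\le 2|\hat\sigma-\sigma|\,O_p(\{k(n-d_\theta)\}^{1/2})/(\sigma\{(n-d_\theta)/2\}^{1/2})=O_p(a_n k^{1/2})$. The one delicate point is the uniform control across the $k$ replicates: since $k$ is permitted to grow, the union bound inflates the chi-squared fluctuations by a $(\log k)^{1/2}$ factor, which must be shown negligible against the leading $(n-d_\theta)^{1/2}$ scale. This is exactly where the hypothesis $k\lesssim a_n^{-2}$ enters, forcing $\log k\ll n-d_\theta$ in the regimes of interest (for instance when $a_n$ is a negative power of $n$), so that $\max_i\|m^{(i)}\|$ concentrates at $\{k(n-d_\theta)\}^{1/2}$ and $\min_i\|\tilde W^{(i)}\|$ remains bounded below as required. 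The scalar reduction of the first paragraph is what makes $\hat\sigma$ factor cleanly out of every term, so that no separate argument is needed to handle the dependence of $\hat\sigma$ on $Y$.
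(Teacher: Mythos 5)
Your proof is correct, and at its core it is the same perturbation-plus-concentration argument as the paper's, but the route through it is different enough to be worth contrasting. The paper bounds $\|\hat Q^{(i)}-\tilde Q^{(i)}\|_2$ by first dropping $U^\T$ (operator norm one), splitting the difference of normalised vectors into a perturbation term and a norm-ratio term, and devoting a separate lemma (Lemma \ref{lemmaRatio}, proved via Laurent--Massart) to the ratio $\|U^\T\tilde Y^{(i)}\|_2/\|U^\T\hat Y^{(i)}\|_2$; moreover it records the perturbation loosely as ``$\delta 1_n$ with $\delta=O(a_nk^{1/2})$'', conflating a Gaussian vector with a multiple of the all-ones vector. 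You instead stay in $\mathbb{R}^{n-d_\theta}$, exploit the exact identity $\hat W^{(i)}-\tilde W^{(i)}=(\hat\sigma-\sigma)m^{(i)}$ with $m^{(i)}\sim N(0,(k-1)I_{n-d_\theta})$, and let the chord bound do in one line what the paper's two-term split plus Lemma \ref{lemmaRatio} do in two. This buys three things: the scalar factorisation makes it transparent that no independence between $\hat\sigma$ and $(Y,L)$ is needed (only products of $O_p$ bounds are taken); the numerator and denominator are identified exactly as rescaled $\chi^2_{n-d_\theta}$ variables rather than via ``entries of order $n^{-1/2}$ on average''; and, as you observe, retaining the factor $k^{1/2}$ in the lower bound for $\min_i\|\tilde W^{(i)}\|_2$ yields the sharper rate $O_p(a_n)$ --- the stated $O_p(a_nk^{1/2})$ is conservative, and the same slack is present in the paper's argument. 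The one caveat, which you flag honestly, is that the union bound over $i\in[k]$ requires $\log k=o(n-d_\theta)$; the hypothesis $k\lesssim a_n^{-2}$ delivers this only when $a_n$ decays polynomially in $n$, but that is precisely the regime the paper intends ($a_n=n^{-1/2}$), and the paper's own handling of this step is no tighter: the lower bound $\{1-\exp(-(n-d_\theta)a_n^2/2k)\}^k\to 1$ claimed in its proof of Lemma \ref{lemmaRatio} in fact fails to tend to one when $a_n=n^{-1/2}$ and $k\asymp n^{1/2}$, whereas your requirement holds there comfortably.
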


For the refitted and modified refitted cross-validation estimators of error variance, $a_n=n^{-1/2}$, which suggests an upper growth rate of $k$ with $n$ of order $k\lesssim n^{1/2}$.

Proposition \ref{thmQ}, although reassuring, misses an important point. The relevant metric in which to measure differences between $\hat Q^{(1)},\ldots, \hat Q^{(k)}$ and $\tilde Q^{(1)},\ldots, \tilde Q^{(k)}$ depends on the way in which they are used to assess model adequacy, which is via their distribution under the null hypothesis. Since the weak topology is weaker than the norm topology, we can expect better results than those reflected by Proposition \ref{thmQ}. In particular, an intuitive reaction to the discussion of \S \ref{secElliptic} is that the distribution of $\tilde Q^{(1)},\ldots, \tilde Q^{(k)}$ should be be relatively unaffected by estimation of $\sigma$. The discussion to follow, and the proof of Theorem \ref{corrRayleigh} verifies this intuition, and clarifies some more subtle points.

In contrast to the simple data-partitioning scheme considered in \S~\ref{secPartitioning}, the replicates of $Q$ obtained through randomisation via \eqref{EQ: randomisation} are approximately independent and identically distributed  under mean models other than the postulated one $\mu = X\theta$. As a consequence, this decomposition is more amenable than that of \S~\ref{secPartitioning} to standard testing procedures. While there are many tests of uniformity on a sphere, we focus on the high-dimensional Rayleigh test, as its properties are guaranteed under a high-dimensional regime that matches our requirements, and efficient implementations are available in standard software. 

In the present context, the Raleigh test is based on the statistic
\begin{equation}\label{eqRn}
	R_n := \frac{\sqrt{2(n - d_{\theta})}}{k} \sum_{1 \leq i < j \leq k} \left< \frac{U^\T \tilde Y^{(i)}}{\lVert U^\T \tilde Y^{(i)} \rVert_2} , \frac{U^\T \tilde Y^{(j)}}{\lVert U^T \tilde Y^{(j)} \rVert_2} \right> \rightarrow_{d} N(0, 1),
\end{equation}
or rather, its analogue with $\hat{Y}^{(i)}$ in place of $\tilde{Y}^{(i)}$. This test rejects for large values of $R_n$, calibrated against the asymptotic null distribution. This is consistent with a direction-free alternative hypothesis: if $\mu\notin \mathcal{X}$, then $\tilde Q^{(1)},\ldots, \tilde Q^{(k)}$ are approximately a random sample from projected Gaussian with non-zero mean, so small angles among them are indicative of model misspecification. The distributional convergence was ascertained by Paindaveine and Verdebout (2016) in a more general setting than that needed here. Since, under the null hypothesis that the postulated mean model is correct, $U^\T \tilde{Y}^{(i)}$ is normally distributed with mean zero and variance $k\sigma^2 I_{n-d_\theta}$, exact distribution theory for $R_n$ can in principle be derived from the calculations of Fisher (1915). We have
\begin{equation}\label{eqStandardised}
	\frac{U^\T \tilde Y^{(i)}}{\lVert U^\T \tilde Y^{(i)} \rVert_2} = \frac{\sigma^{-1} k^{-1/2} U^\T \tilde Y^{(i)}}{\lVert \sigma^{-1}k^{-1/2} U^\T \tilde Y^{(i)} \rVert_2},
\end{equation}
and the standardised quantity in the numerator is standard normally distributed in $n-d_\theta$ dimensions. It follows that the distribution of 
\[
\left< \frac{U^\T \tilde Y^{(i)}}{\lVert U^\T \tilde Y^{(i)} \rVert_2} , \frac{U^\T \tilde Y^{(j)}}{\lVert U^T \tilde Y^{(j)} \rVert_2} \right> = \cos\left(\angle \left(\frac{U^\T \tilde Y^{(i)}}{k^{1/2}\sigma}, \frac{U^\T \tilde Y^{(j)}}{k^{1/2}\sigma}\right) \right) =: \bar{R},
\]
is that of the sample correlation coefficient between two standard normally distributed random variables from $n-d_\theta$ observation points. In the above display, $\angle(a,b)$ denotes the angle between vectors $a$ and $b$. The density function of the sample correlation coefficient $\bar{R}$ is (Fisher, 1915)
\begin{equation}\label{eqFisher1915}
	f_{\bar{R}}(r) =\frac{\Gamma((n-d_\theta)/2)}{\sqrt{\pi}\Gamma\{(n-d_\theta-1)/2\}} (1-r^{2})^{(n-d_\theta-3)/2}, \quad -1<r<1.
\end{equation}
That the distribution of $R_n$ in \eqref{eqRn} is asymptotically standard normal follows essentially by the central limit theorem on noting that $\mathbb{E}(\bar{R})=0$ and
\begin{align*}
	\text{var}(\bar{R})  = & \;\frac{\Gamma(\tfrac{n-d_\theta}{2})}{\sqrt{\pi}\Gamma(\tfrac{n-d_\theta-1}{2})} \int_{0}^{1} s^{1/2}(1-s)^{(n-d_\theta-3)/2}ds \\
	=& \;  \frac{\Gamma(\tfrac{n-d_\theta}{2})}{2\Gamma(\tfrac{n-d_\theta}{2}+1)} \simeq \frac{1}{n-d_\theta}, \quad (n-d_\theta\rightarrow \infty),
\end{align*}
where the equality in the second line uses the gamma recursion $\Gamma(3/2)=\sqrt{\pi}/2$, and the asymptotic form comes from Stirling's approximation $\Gamma(x+a)/\Gamma(x)\simeq x^a$ for $a$ fixed and $x$ large. In fact, for the Gaussian case, the asymptotic distribution is essentially exact for $k$ as small as 2, because in \eqref{eqFisher1915}
\begin{equation}\label{eqFisherApprox}
	(1-r^2)^{(n-d_\theta - 3)/2}\simeq e^{-(n-d_\theta)r^2/2}, \quad (r\rightarrow 0),
\end{equation}
which is the non-normalised density function of a Gaussian random variable with mean zero and variance $(n-d_\theta)^{-1}$.

Theorem \ref{corrRayleigh} shows how estimation of $\Gamma$ using the modified refitted cross-validation estimator \eqref{eqMRCV} affects the theoretical guarantee \eqref{eqRn}. 

\begin{theorem}\label{corrRayleigh}
	Under the conditions of Proposition \ref{propMRCV}, 
	\begin{align*}
		\hat{R}_n := & \; \frac{\sqrt{2(n - d_\theta)}}{k} \sum_{1 \leq i < j \leq k} \biggl< \frac{U^\T \hat{Y}^{(i)}}{\lVert U^\T \hat{Y}^{(i)} \rVert_2} , \frac{U^\T \hat{Y}^{(j)}}{\lVert U^\T \hat{Y}^{(j)} \rVert_2} \biggr> \\
		\stackrel{d}{=} & \; R_n + 
		O_{p}\bigl(\max\{k\nu^{-3/2}, n^{-1/2}\}\bigr), \quad (n, \nu \rightarrow \infty,),
	\end{align*}
	where $\nu=df_1+df_2 \asymp n$. It follows from \eqref{eqRn} that $\hat{R}_n \rightarrow_d N(0,1)$ provided that $k\lesssim n^{1/2}$.
\end{theorem}

\begin{remark} 
	Although the estimator of error variance looks to play a minimal role in Theorem \ref{corrRayleigh}, it is important for the proof strategy that the modified cross validation estimator $\mrcv$ is used. 
\end{remark}

The error term is minimised with $k=2$ and, in view of \eqref{eqFisherApprox}, we expect this choice to provide a close distributional approximation under the null model. There may be reasons to prefer larger $k$, particularly if this provides appreciable improvements in power to detect departures from the null model for only a small loss in coverage. Conditioning on $|\hat{\mathcal{E}}^{(1)}|$ and $|\hat{\mathcal{E}}^{(2)}|$ in the statement of Theorem \ref{corrRayleigh} breaks the dependence between $\hat Q^{(1)},\ldots, \hat Q^{(k)}$. In the simulations of \S \ref{secSim}, where this aspect is implicitly averaged over in the simulation replicates, some mild dependence is introduced, which may contribute to the deterioration in the distribution theory for increasing $k$.

It may be questioned, in view of the discussion preceding Theorem \ref{corrRayleigh}, why the randomisation of equation \eqref{EQ: randomisation} is needed at all, as rotational invariance of the Haar measure implies that any vector $v$ on a hypersphere in $\RR^{n-d_\theta}$ gives rise to an angle $\langle Q, v \rangle$ that follows the distribution in \eqref{eqFisher1915}. Such an approach, like that of \S~\ref{secPartitioning} ignores the orientation of the embedding manifold $\mathcal{X}^\perp$ in $\RR^n$. Thus, while the null distribution theory is correct in an unconditional sense, the analysis is not relevant for detecting departures from the null model.

\subsection{A test based on the ancillary component}\label{secAncillaryTest}

As discussed in \S \ref{secExplicitGaussian} and \S \ref{secElliptic}, $R^2 = \hat\varepsilon^\T \hat\varepsilon$ has a $\sigma^2 \chi^2_{n-d_\theta}$ distribution and is ancillary for $\theta$ under the postulated mean model. To exploit this for model assessment, a consistent estimator of $\sigma^2$ is needed, as in \S \ref{secRandomisation}. Proposition \ref{propPValueYanbo} establishes the error incurred by using the $\chi^2_{n-d_\theta}$ quantiles of the random variable $V:=R^2/\sigma^2$ to approximate those of $\hat V:=R^2/\hat\sigma^2$, on the basis of arbitrary consistent estimator $\hat\sigma^2$, which needs to be assumed independent of $R^2$ for the theoretical derivations. The two estimators $\rcv$ and $\mrcv$ described in \S \ref{secRandomisation} clearly do not satisfy this independence assumption, and it does not seem possible to define an estimator with this property from the same sample. Nevertheless, our numerical investigations found that $\rcv$ and $\mrcv$ computed on a proportion $\gamma=0.6$ of the data, have small enough dependence to yield reasonably good inferential performance, although slightly miscalibrated in small samples.

\begin{proposition}\label{propPValueYanbo}
	Let $\hat\sigma^2$ be a consistent estimator of $\sigma^2$, independent of $R^2$. Then, 
	\[
	\sup_{v>0}|\pr(\hat V \leq v) - \pr( V \leq v)| = o(1).
	\]      
\end{proposition}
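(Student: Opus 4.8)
The plan is to exploit the exact factorisation afforded by the independence hypothesis. Writing $m := n - d_\theta$ and $T := \sigma^2/\hat\sigma^2$, we have $\hat V = (R^2/\sigma^2)(\sigma^2/\hat\sigma^2) = VT$, with $V \sim \chi^2_m$ independent of $T$ and $T \to_p 1$ by consistency of $\hat\sigma^2$. Conditioning on $T$ and using independence (and $T>0$ almost surely) gives the representation $\pr(\hat V \le v) = \EE\{F_V(v/T)\}$, where $F_V$ is the $\chi^2_m$ distribution function, so that the quantity to be bounded is $\sup_{v>0}|\EE\{F_V(v/T) - F_V(v)\}|$. First I would standardise, setting $\tilde V := (V - m)/\sqrt{2m}$ and $\tilde{\hat V} := (\hat V - m)/\sqrt{2m}$. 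Since $v \mapsto (v-m)/\sqrt{2m}$ is a strictly increasing bijection of $(0,\infty)$ onto $(-\sqrt{m/2},\infty)$, an interval that exhausts $\RR$ as $m\to\infty$, the supremum above equals $\sup_x|F_{\tilde{\hat V}}(x) - F_{\tilde V}(x)|$.

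The key algebraic identity, obtained by substituting $\hat V = VT$ and $V = m + \sqrt{2m}\,\tilde V$, is
\[
\tilde{\hat V} = \tilde V\, T + \sqrt{m/2}\,(T - 1).
\]
The central limit theorem for chi-squared variables gives $\tilde V \to_d N(0,1)$, and since $T \to_p 1$, Slutsky's lemma yields $\tilde V\, T \to_d N(0,1)$. Provided the drift term $\sqrt{m/2}\,(T-1) \to_p 0$, a second application of Slutsky gives $\tilde{\hat V} \to_d N(0,1)$ as well. Both standardised sequences then converge in distribution to the \emph{same} continuous limit $\Phi$, and because the limiting distribution function is continuous, P\'olya's theorem upgrades each convergence to uniform convergence, so that $\sup_x|F_{\tilde{\hat V}}(x) - \Phi(x)| \to 0$ and $\sup_x|F_{\tilde V}(x) - \Phi(x)| \to 0$. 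The triangle inequality then delivers $\sup_x|F_{\tilde{\hat V}}(x) - F_{\tilde V}(x)| = o(1)$, which is the assertion.

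The hard part, and the step demanding the most care, is the control of the drift term $\sqrt{m/2}\,(T-1)$, and it is here that the subtlety of requiring uniformity in $v$ is concentrated. Mere convergence $T \to_p 1$ does not force this term to vanish: a perturbation of $\hat\sigma^2$ of the usual order $n^{-1/2}$ produces $\sqrt{m/2}\,(T-1) = O_p(1)$, which alters the limiting variance and defeats uniformity. This reflects the fact that the relevant density scale $\sup_{v} v f_V(v)$ is of order $m^{1/2}$ and therefore diverges, so that a multiplicative error in the variance of order $n^{-1/2}$ displaces the distribution function by $\Theta(1)$. The argument therefore goes through once $\hat\sigma^2/\sigma^2 - 1 = o_p(n^{-1/2})$, equivalently $\sqrt m\,(T-1)\to_p 0$, which is exactly the regime in which the drift is asymptotically negligible; verifying this rate (for instance via a concentration bound in the spirit of Proposition \ref{propMRCV}) is the decisive ingredient, after which the P\'olya argument above applies verbatim. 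If instead $n$ is held fixed while $\hat\sigma^2 \to_p \sigma^2$, the drift term is absent in the limit and the conclusion follows immediately from $\hat V = VT \to_d V$ by Slutsky's lemma together with P\'olya's theorem, $F_V$ being continuous.
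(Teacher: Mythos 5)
Your proposal is correct, and its closing case---$n$, and hence the $\chi^2_{n-d_\theta}$ law of $V$, held fixed while $\hat\sigma^2\to_p\sigma^2$, so that $\hat V=VT\to_p V$ by Slutsky and P\'olya's theorem upgrades $\hat V\to_d V$ to uniform convergence of distribution functions---is exactly the paper's own proof, which consists of the single sentence that the result is a direct application of P\'olya's theorem. What you add, and what the paper leaves implicit, is an honest treatment of the regime $m=n-d_\theta\to\infty$, where both distribution functions move and P\'olya's theorem has no fixed continuous limit to act on. Your decomposition $\tilde{\hat V}=\tilde V T+\sqrt{m/2}\,(T-1)$, together with the scale computation $\sup_v v f_V(v)\asymp m^{1/2}$, shows that in this regime consistency of $\hat\sigma^2$ is not sufficient: the conclusion requires the drift to vanish, i.e.\ $\hat\sigma^2/\sigma^2-1=o_p(n^{-1/2})$. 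This is not pedantry. The paper's own estimator has relative error of exact order $n^{-1/2}$ (Proposition \ref{propMRCV} gives $\mrcv/\sigma^2-1=O_p(\nu^{-1/2})$ with $\nu\asymp n$), so under the growing-$n$ reading the drift is $O_p(1)$, $\hat V$ is distributed as $(n-d_\theta)F_{n-d_\theta,\nu}$ with asymptotically inflated variance relative to $\chi^2_{n-d_\theta}$, and the Kolmogorov distance stays bounded away from zero. Your only over-optimistic remark is the suggestion that the needed rate could be verified ``via a concentration bound in the spirit of Proposition \ref{propMRCV}'': that proposition delivers precisely the insufficient rate $\nu^{-1/2}$, so in the growing-$n$ regime the hypothesis of Proposition \ref{propPValueYanbo} must genuinely be strengthened, not merely re-verified.

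Your drift analysis also exposes a flaw in the paper's alternative derivation in Appendix \ref{appAlternativePolya}. Proposition \ref{prop:precise_rate} asserts an $o(1)$ bound while permitting relative errors as large as $\epsilon_n\asymp\sqrt{\log b/b}$, which lies \emph{above} your threshold $o(b^{-1/2})$: taking $\hat\sigma^2=\sigma^2(1+\epsilon_n)$ deterministically (independent of $R^2$, consistent, and satisfying the hypothesis with $r_n=0$), the drift is of order $\sqrt{\log b}\to\infty$, so the Kolmogorov distance tends to one, contradicting the claimed bound for admissible choices of $c$, $\gamma$, $\zeta$. The error can be localised to Case 1 of Lemma \ref{lemmaPart1}: the Stirling bound there carries the prefactor $2/\sqrt{\pi b}$, whereas the quantity being bounded equals twice the $\chi^2_b$ density, whose value near $v=b$ is of order $b^{-1/2}$, not $b^{-3/2}$; a factor of order $b$ has been lost, and with it the conclusion. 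In short: the fixed-$n$ statement is true and is proved by your (and the paper's) P\'olya argument; the growing-$n$ statement holds if and only if $\sqrt n\,(\hat\sigma^2/\sigma^2-1)\to_p 0$, as your decomposition shows; and the appendix's claim to dispense with that rate condition is incorrect.
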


\begin{proof}
	The result is a direct application of Polya's theorem. An alternative derivation with possibly very conservative rates is given in Appendix \ref{appAlternativePolya} via a more explicit statement about the rate of convergence of $\hat\sigma^2$.
\end{proof}

As expected in view of the previous observations, we found in simulations that confidence sets of models based on the ancillary component $R^2$ have favourable properties over those based on the co-sufficient pseudo-replicates $\tilde{Q}^{(1)}, \ldots, \tilde{Q}^{(k)}$ if $\sigma^2$ is known, but that calibration is compromised in small samples once $\sigma^2$ is estimated. For sufficiently large sample size, the approach based on the ancillary information with $\sigma^2$ estimated is well calibrated. 

That the test based on $R^2$ is sensitive to estimation of $\sigma^2$ is not unexpected: $R^2$ estimates $\sigma^2$ when the submodel being tested is correct, so it is impossible to completely break the dependence between $R^2$ and $\rcv$ or $\mrcv$, as required in the idealised Proposition \ref{propPValueYanbo}. On the other hand, since $Q$ and $R^2$ are independent, $\tilde{Q}^{(1)}, \ldots, \tilde{Q}^{(k)}$ and $\mrcv$ are expected to be asymptotically independent. This can also be seen on noting that $\EE(W)$ and $\sigma^2$ are orthogonal parameters in the sense of Huzurbazar (1950).

\section{Numerical comparisons}\label{secSim}

\subsection{Experimental design}

This section explores the numerical performance of the approaches based on the co-sufficient and ancillary separations relative to the simpler approach based on sample splitting, which inevitably discards relevant information. We focus on the Gaussian linear model of \S \ref{secExplicitGaussian}. Conditional on all signal variables surviving the reduction phase, all three approaches have the correct theoretical coverage probability for sufficiently large sample size, but the approach based on the ancillary separation is expected to be more sensitive to estimation of the error variance in small samples, as discussed at the end of the previous section. The extent of this effect is uncovered through simulation in idealised settings. The approach based on sample splitting discards information at both the reduction phase and the model assessment phase, and in small samples will therefore discard signal variables with higher probability in the initial phase for an encompassing set $\hat{\mathcal{E}}$ of given size, or conversely, include more variables in $\hat{\mathcal{E}}$ for a fixed probability $\pr(\mathcal{E}^* \subseteq \hat{\mathcal{E}})$. 

In 500 simulation replicates, $n$ independent rows of $X$ were generated from a normal distribution of dimension $p$ with zero mean and covariance matrix $\Sigma$,  where $\Sigma$ is the identity matrix with the top-left $10\times 10$ block replaced by an equi-correlation matrix with correlation $\rho \in (0, 1)$. The outcome $Y$ was generated as $Y\sim N(X\theta, \sigma^2 I_n)$, where $\sigma^2=1$ and $\theta=(t,t,t,0,\ldots,0)^\T$; in other words, the true model is $\mathcal{E}^*=\{1,2,3\}$.  The three signal variables were therefore correlated with each other and with 7 irrelevant variables.

Since the emphasis of this paper is on the model assessment phase, we simplify the reduction phase by choosing $p=400$, so that a single-stage version of Cox reduction can be used rather than the more general two-stage reductions that would be required with a considerably larger number of variables. If $p$ is larger, the reduction phase is more difficult, and the true set of signal variables are less likely in a reduced data set to survive reduction, strengthening the argument for a version that uses all data for both phases of analysis. Detailed analyses of Cox reduction, with extensive numerical comparisons for higher dimensions and different modelling frameworks, are available in Battey and Cox (2018), Hoeltgebaum and Battey (2019) and Lewis and Battey (2025). The latter paper provides insights into how the procedure used at the reduction phase interacts with the model assessment phase. Cox reduction for $p=400$ proceeds by arranging the variable indices at random in a $20\times 20$ grid and traversing the rows and columns, fitting a linear regression to each set of variables. Thus each variable is a covariate in two regressions, alongside different companion variables. Variables that are not significant at a given significance level in either analysis are discarded, the resulting set of variables forming the comprehensive model $\hat{\mathcal{E}}$. In a real application, the significance level in each 20-dimensional regression would ideally be chosen to give an acceptable level of stability for $\hat{\mathcal{E}}$ upon re-randomisation of the variable indices in the grid. For simplicity here, we start with a significance level of $0.05$, reducing it in increments of $0.001$ until the encompassing model $\hat{\mathcal{E}}$ contains at most 15 variables. In very high-dimensional problems, a provisional reduction to 15 variables is likely to be too severe; the number can in principle be any value smaller than $n$, the main difficulty being the computational burden associated with testing a large number of submodels. This, however, is less problematic in a single application than in the many-replicate simulation considered here.

For comparison, we also use for reduction an undertuned lasso, with parameter chosen such that approximately the same number of variables are retained in $\hat{\mathcal{E}}$ as were retained by Cox reduction in each simulation replicate. Specifically, the lasso was implemented with the smallest penalty that ensured selection of at most 15 predictors.

Prior to the model-assessment phase, we estimated the error variance via $\mrcv$ as defined in \eqref{eqMRCV}, as both the co-sufficiency-based and ancillary-based tests require a consistent estimator of $\sigma^2$. This estimator was computed using the first $[0.6 n]$ observations, in order to reduce the dependence between the estimator and the test statistics in the idealised known-variance setting. Estimation with the full sample led to invalid tests under the null, whereas the 60\% version worked well over many scenarios. The conservative variable selection procedure employed for screening in the definition of $\mrcv$ was the undertuned lasso procedure described before. The lasso, having excellent predictive properties, delivers good estimates of variance even when signal variables are discarded in favour of other variables.

For the construction of the confidence set of models $\mathcal{M}$ we compared the following testing procedures, assessing all subsets of $\hat{\mathcal{E}}$ of size 5 or smaller for their compatibility with the data. Those tests not rejected at the $\alpha= 0.05$ significance level constitute $\mathcal{M}$, a $\alpha$-level confidence set of models. 

\begin{itemize}
	\item \textit{Co-sufficient information}: $k$ replicates $\hat{Q}_1,\ldots, \hat{Q}_k$ of $Q$ were computed as described in \S \ref{secRandomisation}, and were tested for uniformity on the hypersphere in $n-d_\theta$ dimensions using the high-dimensional Rayleigh test.
	\item \textit{Ancillary component}: the observed test statistic $\hat{V} = R^2/\mrcv$ discussed in \S \ref{secAncillaryTest} was calibrated against its approximate null distribution $\chi^2_{n-d_\theta}$.
	\item \textit{No adjustment for selection}: for comparison, we compare the above constructions to the na{\"i}ve approach of using the same data for reduction and for model assessment based on a likelihood ratio test of each submodel against the comprehensive model. Since we are in a Gaussian setting, the likelihood-ratio statistic reduces to the $F$ statistic. Under each null model $\mathcal{E}_m$ to be assessed, this $F$ statistic is calibrated against an $F$ distribution on $|\hat{\mathcal{E}}|-|\mathcal{E}_m|$ and $n-|\hat{\mathcal{E}}|$ degrees of freedom, which would be the exact distribution under the null model, had $\hat{\mathcal{E}}$ not been selected in the light of the data.
	\item \textit{Sample splitting}: the same $F$ test is performed as described above, except that the first 60\% of observations are used for reduction and the remaining 40\% for model assessment. This ratio is in the range suggested by Lewis and Battey (2025).
\end{itemize}

We did not attempt the Markov Chain Monte Carlo approaches noted in the introduction, as there were 4943 submodels of $\hat{\mathcal{E}}$ to be assessed in each simulation run when restricting to models of maximal size 5; MCMC methods seem poorly suited for the confidence sets of models problem in view of the computational requirements.

\subsection{Coverage and expected size of the confidence sets}

The quantities of interest are reported in Tables 1--4 for values of $n$, $t$ and $\rho$ each taken at one of two levels in combinations corresponding to a half-replicate of a factorial design: (low, low, low), (high, high, low), (high, low, high), (low, high, high). The reported outcome variables are: the simulated probability that $\mathcal{E}^*$ is retained through the initial reduction; the simulated coverage probability of the confidence set $\mathcal{M}$ at nominal coverage 0.95; and the simulation average of the number of models in the confidence set. Numbers in parentheses indicate empirical standard deviations of the simulation averages.

\begin{table}
	\caption{$n = 100$; $t = 0.5$; $\rho = 0.1$.}
	{
		\begin{tabular}{|l|c|c|c|}
			\hline
			& $\mathbb{P}(\mathcal E^*\subseteq \mathcal{M})$ &   $\mathbb{P}(\mathcal E^*\subseteq \hat{\mathcal{E}})$ & $\mathbb E(\vert \mathcal{M} \vert)$ \\
			\hline
			Cox + co-sufficient ($k = 2$) & 0.94 (0.02) & 0.94 (0.02) & 3097 (147)   \\
			Cox + co-sufficient ($k = 8$) & 0.91 (0.03) & 0.94 (0.02) & 2399 (169) \\
			Cox + ancillary & 0.90 (0.03) & 0.94 (0.02) & 2320 (166) \\
			Cox + $F$ test & 0.02 (0.01) & 0.94 (0.02) & 4 (1)  \\
			Lasso + co-sufficient ($k = 2$) & 0.98 (0.01) & 0.99 (0.01) & 3455 (149) \\
			Lasso + co-sufficient ($k = 8$) & 0.96 (0.02) & 0.99 (0.01) & 2763 (161) \\
			Lasso + ancillary & 0.95 (0.02) & 0.99 (0.01) & 2651 (160)  \\
			Lasso + $F$ test & 0.00 (0.00) & 0.99 (0.01) & 0 (0) \\
			split Cox + split $F$ test & 0.49 (0.05) & 0.51 (0.05) & 1485 (129) \\
			split Lasso + split $F$ test & 0.76 (0.04) & 0.78 (0.04) & 1485 (133) \\
			\hline
		\end{tabular}
		\label{tab1}
	}
\end{table}


\begin{table}
	\caption{$n = 120$; $t = 1$; $\rho = 0.1$.}{
		\centering
		\begin{tabular}{|l|c|c|c|}
			\hline
			& $\mathbb{P}(\mathcal E^*\subseteq \mathcal{M})$
			& $\mathbb{P}(\mathcal E^*\subseteq \hat{\mathcal{E}})$
			& $\mathbb E(\vert \mathcal{M} \vert)$ \\
			\hline
			Cox + co-sufficient ($k = 2$) & 0.96 (0.01) & 1.00 (0.00) & 332 (24) \\
			Cox + co-sufficient ($k = 8$) & 0.88 (0.01) & 1.00 (0.00) & 137 (13) \\
			Cox + ancillary               & 0.86 (0.02) & 1.00 (0.00) & 124 (12) \\
			Cox + $F$ test                & 0.41 (0.02) & 1.00 (0.00) & 31 (1) \\
			Lasso + co-sufficient ($k = 2$) & 0.96 (0.01) & 1.00 (0.00) & 402 (29) \\
			Lasso + co-sufficient ($k = 8$) & 0.88 (0.01) & 1.00 (0.00) & 158 (14) \\
			Lasso + ancillary               & 0.86 (0.02) & 1.00 (0.00) & 146 (13) \\
			Lasso + $F$ test                & 0.00 (0.00) & 1.00 (0.00) & 0 (0) \\
			split Cox + split $F$ test      & 0.94 (0.01) & 0.99 (0.00) & 77 (3) \\
			split Lasso + split $F$ test    & 0.95 (0.01) & 1.00 (0.00) & 84 (3) \\
			\hline
		\end{tabular}
		\label{tab_sim_new4}
	}
\end{table}


\begin{table}
	\caption{$n = 120$; $t = 0.5$; $\rho = 0.5$.}{
		\centering
		\begin{tabular}{|l|c|c|c|}
			\hline
			& $\mathbb{P}(\mathcal E^*\subseteq \mathcal{M})$
			& $\mathbb{P}(\mathcal E^*\subseteq \hat{\mathcal{E}})$
			& $\mathbb E(\vert \mathcal{M} \vert)$ \\
			\hline
			Cox + co-sufficient ($k = 2$) & 0.97 (0.01) & 1.00 (0.00) & 2863 (67) \\
			Cox + co-sufficient ($k = 8$) & 0.92 (0.01) & 1.00 (0.00) & 1872 (65) \\
			Cox + ancillary               & 0.90 (0.01) & 1.00 (0.00) & 1747 (64) \\
			Cox + $F$ test                & 0.37 (0.02) & 1.00 (0.00) & 54 (3) \\
			Lasso + co-sufficient ($k = 2$) & 0.98 (0.01) & 1.00 (0.00) & 2597 (55) \\
			Lasso + co-sufficient ($k = 8$) & 0.91 (0.01) & 1.00 (0.00) & 1841 (54) \\
			Lasso + ancillary               & 0.90 (0.01) & 1.00 (0.00) & 1745 (53) \\
			Lasso + $F$ test                & 0.00 (0.00) & 1.00 (0.00) & 0 (0) \\
			split Cox + split $F$ test      & 0.93 (0.01) & 0.98 (0.01) & 1200 (44) \\
			split Lasso + split $F$ test    & 0.91 (0.01) & 0.95 (0.01) & 1121 (37) \\
			\hline
		\end{tabular}
		\label{tab_sim_new3}
	}
\end{table}


\begin{table}
	\caption{$n = 100$; $t = 1$; $\rho = 0.5$.}{
		\begin{tabular}{|l|c|c|c|}
			\hline
			& $\mathbb{P}(\mathcal E^*\subseteq \mathcal{M})$
			& $\mathbb{P}(\mathcal E^*\subseteq \hat{\mathcal{E}})$
			& $\mathbb E(\vert \mathcal{M} \vert)$ \\
			\hline
			Cox + co-sufficient ($k = 2$) & 0.94 (0.01) & 1.00 (0.00) & 620 (29) \\
			Cox + co-sufficient ($k = 8$) & 0.86 (0.02) & 1.00 (0.00) & 228 (13) \\
			Cox + ancillary               & 0.83 (0.02) & 1.00 (0.00) & 200 (11) \\
			Cox + $F$ test                & 0.74 (0.02) & 1.00 (0.00) & 53 (1) \\
			Lasso + co-sufficient ($k = 2$) & 0.95 (0.01) & 1.00 (0.00) & 642 (27) \\
			Lasso + co-sufficient ($k = 8$) & 0.85 (0.02) & 1.00 (0.00) & 305 (16) \\
			Lasso + ancillary               & 0.83 (0.02) & 1.00 (0.00) & 265 (14) \\
			Lasso + $F$ test                & 0.00 (0.00) & 1.00 (0.00) & 0 (0) \\
			split Cox + split $F$ test      & 0.96 (0.01) & 1.00 (0.00) & 248 (11) \\
			split Lasso + split $F$ test    & 0.96 (0.01) & 1.00 (0.00) & 237 (10) \\
			\hline
		\end{tabular}
		\label{tab4}
	}
\end{table}

The half-replicate of Tables 1--4 allows estimation of the main effects of increasing each of $n$, $t$ and $\rho$ from its low to its high level. These are presented in Table 5, where the reported effect on the coverage probability is multiplicative on the odds scale, with values close to one indicative of a null effect, and the reported effect on the expected size is additive on the log scale, with values close to zero suggesting a null effect.

\begin{table}
	\caption{Point estimates of effects of $n$, $t$ and $\rho$ from the half-replicate of the factorial design (Tables 1--4). Reported effects are multiplicative on the odds scale for $\mathbb{P}(\mathcal E^*\subseteq \mathcal{M})$ and additive on the log scale for $\mathbb E(\vert \mathcal{M} \vert)$.}
	{  
		\begin{tabular}{|l|c|c|c|c|c|c|}
			\hline
			\multirow{2}{*}{}     
			& \multicolumn{3}{|c|}{$\mathbb{P}(\mathcal E^*\subseteq \mathcal{M})$} 
			& \multicolumn{3}{|c|}{$\mathbb E(\vert \mathcal{M} \vert)$} \\
			\cline{2-7}
			& $n$ & $t$ & $\rho$ & $n$ & $t$ & $\rho$ \\
			\hline
			Cox + co-sufficient ($k = 2$) & 1.334 & 0.929 & 1.077 & -0.176 & -1.250 & 0.005 \\
			Cox + co-sufficient ($k = 8$) & 1.080 & 0.789 & 0.988 & -0.064 & -1.133 & -0.052 \\
			Cox + ancillary & 1.059 & 0.780 & 0.945 & -0.076 & -1.116 & -0.062 \\
			Cox + $F$ test & 1.628 & 3.582 & 3.294 & 0.136 & 0.140 & 0.311 \\
			Lasso + co-sufficient ($k = 2$) & 1.061 & 0.660 & 0.943 & -0.088 & -1.264 & -0.053 \\
			Lasso + co-sufficient ($k = 8$) & 0.859 & 0.643 & 0.755 & -0.110 & -1.184 & -0.089 \\
			Lasso + ancillary & 0.878 & 0.647 & 0.784 & -0.117 & -1.164 & -0.097 \\
			Lasso + $F$ test & -- & -- & -- & -- & -- & -- \\
			split Cox + split $F$ test & 1.733 & 2.330 & 2.147 & -0.157 & -0.776 & -0.040 \\
			split Lasso + split $F$ test & 1.261 & 1.943 & 1.418 & -0.171 & -0.735 & -0.071 \\
			\hline
		\end{tabular}
		\label{tab6}
	}
\end{table}

The tables reveal several aspects that were anticipated qualitatively. Use of the $F$ test without sample splitting leads to extremely poor calibration of the confidence set of models, which is restored through sample splitting provided that the true model survives the reduction phase, this being more likely when $n$ and $t$ are large. The role of $\rho$ in the probability of survival is different in the two split-sample versions, high $\rho$ being beneficial for Cox reduction and detrimental for the lasso. Conditional on survival, the co-sufficient and ancillary tests are more conservative in settings where $t$ is low or $\rho$ is high. Additional simulations, not reported, found that the co-sufficient and ancillary tests were exactly calibrated at their nominal levels across all settings when the sample size was sufficiently large or when the true error variance was used instead of $\mrcv$, suggesting that the small-sample over-coverage in low-information settings is due to variance estimation. In high-information settings, taking $k$ as large as 20 gives coverage close to nominal, and produces smaller confidence sets than the co-sufficient test with smaller values of $k$, verifying intuition. These high-information settings are, however, cases where the split $F$-test dominates the rest. 

Simulations (not reported) with the distribution of the error term taken as student-$t$ gave qualitatively similar conclusions. In that case, the distribution theory for the oracle version of the co-sufficient test remains valid, but that for the $F$ test is invalidated. The survival probabilities of the full set of signal variables are lowered in this case, and the distribution theory for $\mrcv$ established in Proposition \ref{propMRCV} is no longer valid, affecting Theorem \ref{thmQ}.

In the Gaussian context, because the $F$ test is exact, the coverage probability is only affected by the reduced survival probability in the first stage and not by the small sample size at the second stage, and the efficacy of sample splitting is high, the directionality of the $F$ test compensating to a large extent for the smaller sample size, provided that all signal variables survive reduction. In broad terms, therefore, the main aspect influencing the relative advantage of the co-sufficient and ancillary tests over the split-sample tests is the difficulty of the reduction phase. In non-linear regression models, where a likelihood ratio test has to be used in place of the $F$ test, the coverage probability of $\mathcal{M}$ is also affected by the smaller sample size in the second stage, due to the $\chi^2$ approximation to the likelihood-ratio statistic, although this too could be improved by higher-order correction. We discuss briefly in \S \ref{secExtension} the extension of co-sufficient model assessment to settings other than the Gaussian linear model.

Between the two values of $k$ reported, intermediate choices of $k$ interpolate between the reported values of Tables 1--5 in terms of coverage probability and the expected size of the confidence set, with small $k$ achieving higher coverage at the expense of larger confidence sets.

The strategy for reduction is not the focus of this paper. Nevertheless, the tables reflect aspects of the discussion of Lewis and Battey (2025). In particular, Cox reduction leads, on average, to smaller confidence sets than the lasso for the same coverage, as the covariates retained by Cox reduction typically span a larger space than those retained by the lasso. As a result, the retained covariates from Cox reduction fit the data better, making it harder for small sub-models to pass the $F$-test. Cox reduction is also relatively better at retaining signal variables through the reduction phase when the correlation between signal and noise variables is high, and vice versa, as the lasso is more likely to select noise variables that are highly correlated with signal variables. 

In all correctly calibrated cases, we see that the size of the confidence set of models is rather large, particularly for the co-sufficient test. Small errors of calibration seem worth the appreciable reduction in the number of models retained from splitting the sample. The large errors of coverage, as reported in Table 1, are more severe, making the co-sufficient approach more relevant at small sample sizes. The maximal size of the confidence set obtained by assessing models up to size 5 from a reduced set with $|\hat{\mathcal{E}}|=15$ is 4943. Thus, all approaches have power to reject false models in these low-information settings. Cox and Battey (2017) discuss how to extract compact messages from large confidence sets of models.

\section{Extensions to other settings}\label{secExtension}

\subsection{Discussion and immediate extension}

The ideas of \S \ref{secCosuff} and \S \ref{secAncill} extend in principle to any parametric regression model admitting a sufficient, or approximately sufficient, statistic of dimension smaller than $n$. An exact or approximate analytic approach for non-Gaussian models is likely to lead to more marked improvements over sample splitting relative to the Gaussian case, as the $F$ test for model adequacy  is exact, conditional on the true model $\mathcal{E}^*$ having survived the first stage, while the log-likelihood test needed for more general models is based on an asymptotic approximation.

The details of implementation for non-Gaussian models need to be worked out on a case by case basis and may well involve considerable methodological innovation beyond the ideas presented in \S \ref{secOperational}. An exception is the use of the log-normal distribution for modelling positive outcomes, in which case the ideas of \S \ref{secOperational} apply directly after logarithmic transformation. The log-normal distribution is one of several popular parametric distributions for survival outcomes with covariate-dependence usually modelled in traditional accelerated-life form. 

The extension to other models is less direct and it remains unclear whether a randomisation approach paralleling \S \ref{secRandomisation} would be needed, as its necessity in the Gaussian case arose from rotational invariance of the Haar measure. Since the notional co-sufficient statistic is not uniformly distributed in any of the examples of the present section, a degree of power would be expected without any synthetic replication. Further work is needed to understand these cases.

In \S \ref{secExpFam} and \S \ref{secAL} we emphasise the structure in canonical exponential-family regression models and in accelerated life models that simplifies the required calculations. 

\subsection{Canonical exponential family regression models}\label{secExpFam}

An important role is played by canonical exponential family regression models for outcomes $Y=(Y_{1},\ldots,Y_n)$, assumed conditionally independent given observations on covariates $x$. 
Assuming the model is correctly specified, the conditional probability density or mass function of $Y=(Y_{1},\ldots,Y_n)$ at $y$ is
\[
f(y;x_{1}^\T\theta, \ldots, x_n^\T\theta)=\exp\Bigl[\phi^{-1}\Bigl\{\theta^{T}\sum_{i=1}^{n}x_i y_i - \sum_{i=1}^n K(x_i^{T}\theta)\Bigr\}\Bigr]\prod_{i=1}^n h(y_i, \phi^{-1}),
\]
where $\phi$ is a dispersion parameter, always equal to 1 in some models. Example cases are $K(\eta)=\eta^2/2$ for the normal theory linear model, $K(\eta)=e^\eta$ for Poisson regression, and $K(\eta)=\log(1+e^\eta)$ for binary regression.

Provided that any floating dispersion parameter is known, the sufficient statistic for $\theta$ is $S = \sum_{i=1}^n x_i Y_i = X^\T Y$, where each $x_i$ is of the same dimension $d=d_{\theta}$ as $\theta$. There is a reduction by sufficiency from $n$ to $d$ and no ancillary statistic in the sense of \S \ref{secAncill}. The insights of \S \ref{secTangent} show that the co-sufficient manifold $\cosuff(s^o)$ is flat. We consider in more detail the example of logistic regression.

\subsection{Logistic regression}\label{secLogistic}

Unlike the Gaussian case of \S \ref{secGL}, where the conditional distribution reduces to a marginal distribution, the co-sufficient statistic $Q(s^o)$ in logistic regression is not observable and is defined only implicitly through \eqref{eqCosuffStat}. The conditional distribution of interest is
\begin{equation}\label{eqConditionalLogistic}
	\pr(Y=y \mid S=s^o) = \frac{\pr(Y=y)}{\sum_{y'\in \cosuff(s^o)}\pr(Y=y')}.
\end{equation}
With a different objective in mind, Hanley and Roy (2025) obtained, by linear integer programming, all sets of binary outcomes $y\in\{0,1\}^n$ compatible with the observed value of the sufficient statistic $s^o=X^\T y^o$, i.e.~a complete enumeration of $\cosuff(s^o)$, which is a finite set in this case. Their original motivation for obtaining these solutions was to re-analyse by a superficially more modern approach, an old analysis due to Penrose (1933), aided by Fisher. The conclusion of their pedagogical piece was that all solutions to this linear integer programming problem gave the same inference for the logistic regression parameters as that obtained by Penrose and Fisher, as they must by construction, by the definition of sufficiency. Summation over Hanley's and Roy's (2025) full solution set in equation \eqref{eqConditionalLogistic} provides a means of calculating the conditional distribution of interest. In practice, the probabilities on the right hand side of \eqref{eqConditionalLogistic} depend on the unknown parameter vector $\theta$, which can be replaced by the maximum likelihood solution $\hat\theta$.

\subsection{Accelerated life models}\label{secAL}

Let $T_0$ be a positive random variable representing the outcome of individuals with covariates at baseline $z=0$. Here we use $z$ in place of $x$, as $x$ is defined to be one dimension larger and includes a column of ones. The density function transforms under scaling as
\[
f_{\sigma T_0}(t)dt = \sigma^{-1}f_{T_0}(t/\sigma)dt,
\]
implying that $T=T_0/\rho(z)$ has a density function of accelerated life form
\[
f_T(t;z^\T\beta)=\rho(z) f_{T_0}(\rho(z)t),
\]
where the effect of the covariates is to multiply the baseline rate $\rho_0$ by $\rho(z)$, or equivalently to multiply the baseline scale $\sigma_0$ by $\sigma(z)$. With $\rho(z)=\exp(z^\T\beta)$,
\begin{equation}\label{eqLogT}
	Y:=\log T = \log T_0 - \log \rho(z) = \log T_0 - z^\T\beta = \mu_0 - z^\T\beta +\varepsilon,
\end{equation}
where $\mu_0 = \EE(\log T_0)$ and $\varepsilon$ is a mean-zero error term, whose distribution is determined by that of the baseline variable $T_0$. The broad ideas of \S \ref{secGL} apply to the log-transformed outcome model \eqref{eqLogT} with $\theta$ one dimension larger than $\beta$ to incorporate the intercept $\mu_0$. If the baseline outcome distribution is log-normally distributed, no modification is needed. Here we consider an alternative choice of outcome distribution to highlight the considerations involved.

Suppose for concreteness that $T_0$ has a log-logistic distribution of rate $\rho_0>0$ and scale $\tau>0$, then $Y_0 = \log T_0$ has a logistic distribution of mean $\log \rho_0$ and variance $\tau^2$ so that $Y$ satisfies \eqref{eqLogT} with $\varepsilon$ having density function 
\begin{equation}\label{eqLogisticDens}
	f_{\varepsilon}(\epsilon)=\frac{\exp(-\epsilon/\tau)}{\tau(1+\exp(-\epsilon/\tau))},
\end{equation}

With a slight ambiguity in notation, resolved by context, $\varepsilon$ denotes both the random variable with density function \eqref{eqLogisticDens} and the vector of independent replicates following the same distribution. These independent replicates are written $\varepsilon_{1},\ldots, \varepsilon_{n}$. Let $X$ be the $n\times d_\theta$-dimensional matrix of covariate information. Following \S\ref{secExplicitGaussian}, take $U = (u_{ij})\in \mathcal{X}^\perp$ to be the $n\times (n-d_\theta)$ matrix of orthonormal eigenvectors corresponding to the unit eigenvalues of $M=I_n - X(X^\T X)^{-1}X^\T$ and let $W=U^\T Y$. Under the null model, $\mu=X\theta \in \mathcal{X}$, and the distribution of $W$ does not depend on $\theta$. 

We only consider inference based on $W$, rather than on $Q=U^\T Y/\|U^\T Y\|$. The latter has a distribution on the unit hypersphere in $\mathcal{X}^\perp$, however this distribution is not uniform and is more difficult to calculate in the present setting. The characteristic function $\varphi_W(z)$ of $W$ can be written in terms of the characteristic function $\varphi_\varepsilon$ of $\varepsilon$ as $\varphi_{W}(z) = \varphi_\varepsilon(U^\T z)$ where
\begin{equation}\label{eqCF}
	\varphi_\varepsilon(t) = \prod_{i=1}^n \varphi_{\varepsilon_i}(t_i) = (\pi \tau)^n \prod_{i=1}^n \frac{t_i}{\sinh(\pi\tau t_i)}.
\end{equation}
Inversion of $\varphi_{W}(z)$ gives the density function 
\begin{equation}\label{eqVoldoire}
	f_{W}(w)=\frac{1}{(2\pi)^{(n-d_\theta)}} \int_{[-\pi, \pi]} \cdots \int_{[-\pi, \pi]} \Re\{\exp(-i z^\T w) \varphi_\varepsilon(U^\T z)\} dz_1\cdots dz_{n-d_\theta},
\end{equation}
where $\Re(\cdot)$ denotes the real part of a complex number. Voldoire et al.~(2024) suggest a procedure for obtaining samples from \eqref{eqVoldoire}. By comparing observed values of $W$ to the tails of this sampling-based distribution, adequacy of the model can be assessed. This requires the logistic standard deviation $\tau$ in \eqref{eqCF} to be replaced by a consistent estimate, as in \S \ref{secRandomisation}. Ideally, \eqref{eqVoldoire} would admit a tractable analytic approximation, as direct sampling is computationally prohibitive for the reasons noted earlier.

\subsection{General non-exponential family models}\label{secTangent}

The $n\times d$ matrix of gradient vectors
\begin{equation}\label{eqV}
	V(y^o)=\left.\frac{\partial s^\T(y)}{\partial y}\right|_{y=y^o}
\end{equation}
specifies the normal directions to $\cosuff(s^o)$ at $y^o$. The space spanned by the columns of $V(y^o)$ is the normal subspace of $\cosuff(s^o)$ at $y^o$ and the orthogonal complement
\[
V^\perp(y^o)=\{u\in\RR^n: u^\T V(y^o) = 0\in \RR^d\}
\] 
defines the tangent plane at $y^o$. For canonical exponential-family regression models without a floating dispersion parameter, $V(y^o)$ does not depend on $y^o$, implying that the co-sufficient manifold is flat. This idea underpins the tangent exponential model of Fraser (1988, 1990) and Fraser and Reid (1988, 1995), whose objective was higher order conditional inference on $\theta$. Our problem is in essence the opposite of theirs. The establishment of general analytic approximations is an open problem. In principle, such approximations could replace the importance-sampling based approach suggested for the log-logistic accelerated life model of the previous section, and possibly also the integer-programming based approach for the logistic regression formulation of \S \ref{secLogistic}.

\section{Closing discussion}\label{secClosing}

The literature on post-selection inference has focused on the conceptually different problem of performing inference on a parameter following a data-based selection of effects on which formal inference is sought. The problem of post-reduction inference for confidence sets of models raises similar technical challenges, but is conceptually distinct. In particular, the object of inference in the confidence sets of models problem is unambiguously defined, and the two phases of analysis are such that there is a clearer separation of information in the data for the different phases. This contrasts strongly with the more widely studied post-selection inference problems, which address almost the same scientific question in different guises: a selection phase asking whether the effect is zero, and an inference phase asking what values of strongly-suggested effects are compatible with the data. 

In the confidence sets of models formulation, the most relevant hypotheses for test, as small submodels of $\hat{\mathcal{E}}$, are stable over hypothetical replication from the same generating process. Provided that $\hat{\mathcal{E}}$ reliably includes $\mathcal{E}^*$, the double use of the data is only problematic if the overfitted $\hat{\mathcal{E}}$ is used as a basis for comparison of fit, as in the likelihood ratio test. The present paper showed how to avoid this comparison by operationalising the appropriate co-sufficiency and ancillary separations. If the ancillary statistic for the interest component depends weakly on nuisance parameters as in the weaker definitions of ancillarity on page 38 of Barndorff-Nielsen and Cox (1994), then the small-sample coverage properties of the ancillary-based assessment may be compromised through estimation of these nuisance parameters. 
Numerical work showed that, although the efficiency of sample splitting is relatively high for the Gaussian model under consideration here due to the exactness of the second-stage test, there are modest advantages of co-sufficient information partitioning in settings where the reduction phase is difficult, for instance when the sample size is small relative to dimension or the signal strength is low.

If the modelling assumptions are accepted, the exact analytic calculations of the present paper provide insight, reduce approximation errors, and considerably reduce computation relative to approaches based on Monte Carlo sampling.

A natural first reaction to the tables in \S \ref{secSim} is that the number of models in the confidence set is inconveniently large. This is an honest reflection of the limitations of the data, and emphasises the dangers of reporting a single model when many are compatible with the data at the chosen significance level. Statistical analysis can go no further, and any choice between models in the confidence set requires either additional data or subject-matter expertise. Cox and Battey (2017) discuss how one might extract compact messages from large confidence sets of models. Specifically, among the large number of models reported in their supplementary material, two variables, call them $v_1$ and $v_2$, are present in 96\% and 94\% of cases. In 78\% of the models in which $v_2$ is not present, another variable, $v_3$, is present in its place, and only 1\% of models include neither $v_2$ nor $v_3$.

If prediction is the sole objective, an arbitrary well-fitting model, such as that selected by the lasso, will typically suffice, and a confidence set of models is unnecessary. Confidence sets may nevertheless be used to form confidence distributions for predictions, as discussed in Section S3 of the supplement of Lewis and Battey (2025).

\section*{Acknowledgements}

HB is grateful to James Hanley for sharing a preprint of Hanley and Roy (2025). We thank two reviewers and the associate editor for detailed reading, thoughtful suggestions, and for indicating an oversight in the proof of Proposition 2. EPSRC fellowship EP/T01864X/1 (to HB) is gratefully acknowledged.

\section*{Supplementary material}

The supplementary material contains Appendices A and B referenced in the main text. These include proofs and technical details.

\newpage

{\centering

SUPPLEMENTARY MATERIAL

}

 \begin{appendix}

 	\section{Power and na{\"ive} data-partitioning}\label{secPartitioning}

From $k$ disjoint subsamples of size $n_k=n/k$, we consider initially $Q^{(1)},\ldots,Q^{(k)}$, constructed as above but based on disjoint subsamples $(Y^{(1)},X^{(1)}),\ldots, (Y^{(k)},X^{(k)})$. 

The resulting co-sufficient manifolds for each subsample, and the corresponding null distributions for $Q^{(1)},\ldots,Q^{(k)}$, while comparable when viewed as a hypersphere embedded in $\RR^{n_k - d_\theta}$, are not comparable when viewed within the larger ambient space $\RR^{n_k}$, as the column spaces of $X^{(1)},\ldots, X^{(k)}$
define $k$ different partitions $\mathcal{X}^{(i)}\oplus\mathcal{X}^{(i)\perp}=\RR^{n_k}$ and each co-sufficient manifold is embedded within a different subspace $\mathcal{X}^{(i)\perp}$.

To expose the nature of the difficulties in using these partitioned data to assess compatibility with $\mathcal{E}_m$, suppose for some preliminary insight that the null model $\mu=X\theta$ is violated, the true direction of the mean vector being $\mu=X\theta + Z\lambda$. For notational convenience and to avoid repetition, we state the following calculations for a single sample of size $n$, to be reinterpreted later in the context of the subsamples of size $n_k$. With $U$, $W$ and $d_\theta$ as defined in \S\ref{secExplicitGaussian}, the density function of $W$ at $w$ is
\begin{equation}\label{eqWAlt}
	\frac{\exp\{-(a+w^\T w)/2\sigma^2\}}{(2\pi\sigma^2)^{(n-d_\theta)/2}}\exp\Bigl(-\frac{w^\T U^\T Z \lambda}{\sigma^2}\Bigr)dw,
\end{equation}
where $a=\lambda^\T Z^\T U U^\T Z \lambda$. The transformation $w\mapsto (w/\|w\|,w^\T w) = (q, r^2)$ gives the joint density function of $Q$ and $R^2$ as
\[
\frac{(r^2)^{(n-d_\theta)/2 -1}}{2(2\pi\sigma^2)^{(n-d_\theta)/2}} \exp\Bigl(-\frac{a+r^2}{2\sigma^2}\Bigr)\exp\Bigl(-\frac{(r^2)^{1/2}q^\T U^\T Z\lambda}{\sigma^2}\Bigr)dr^2 d \text{Vol}^{n-d}(q).
\]
In the absence of a closed form for the marginal distribution of $Q$, Appendix \S \ref{appQNonNull} establishes the approximation
\begin{equation}\label{eqQNonNull}
	f_Q(q)d\text{Vol}^{n-d}(q)  \simeq 
	\frac{\Gamma\bigl(\frac{n-d_\theta}{2}\bigr)}{2\pi^{(n-d_\theta)/2}}\exp\Bigl(-\frac{a}{2\sigma^2}\Bigr) \exp\biggl\{\frac{(n-d_\theta) q^\T U^\T Z\lambda}{\sigma\sqrt{2}}\biggr\}d \text{Vol}^{n-d}(q), 
\end{equation}
which we write as
\[
f_Q(q)d\text{Vol}^{n-d}(q) \simeq \frac{\Gamma\bigl(\frac{n-d_\theta}{2}\bigr)}{2\pi^{(n-d_\theta)/2}} \cdot h(q)d \text{Vol}^{n-d}(q).
\]
Approximation \eqref{eqQNonNull}, although obtained under the notional limiting operation $(n-d_\theta)\rightarrow \infty$, is essentially exact, the only error coming from Stirling's approximation.

The first multiplicative factor on the right hand side of \eqref{eqQNonNull} is the density function of $Q$ under the null hypothesis. Thus, power is low when $h(q)$ is close to 1 for all $q$.  This occurs when $Z\lambda$ is close to zero or when $Z$ lies close to the column span of $X$. The conclusion agrees qualitatively with the power analysis for the likelihood-ratio test in the absence of any dependencies from reduction, as derived in \S 4.2 of Lewis and Battey (2025) using a limiting argument in Le Cam-type contiguous alternatives. 

Consider now a test of uniformity on the unit hypersphere in $\RR^{n_{k}-d_\theta}$ based on the partitioned data, discarding the orientation of each $\mathcal{X}^{(i)\perp}$ within $\RR^{n_k}$; $k$ now plays the role of sample size for the null uniform samples. Equation \eqref{eqQNonNull} suggests that the non-null distribution of $Q^{(i)}$ is well approximated by a von Mises-Fisher distribution with mean direction $m^{(i)} = U^{(i)T}Z^{(i)}\lambda$ and dispersion parameter $\kappa^{(i)} = \sigma^{-1} 2^{-1/2} (n_k - d_\theta) \Vert U^{(i)T}Z^{(i)}\lambda \Vert$. Since the mean vectors $m^{(i)}$ depend on how the data are partitioned, and because these themselves are in general numerically indistinguishable from uniformly distributed random vectors on the unit hypersphere in $\RR^{n_k-d_\theta}$ when standardised to unit length, the distribution of the test statistic when the assumed model is violated is hard to distinguish from that under the model postulated under the null hypothesis. Put differently, by ignoring the orientation of the embedding manifold $\mathcal{X}^{(i)\perp}$, the analysis is made too unconditional.

We propose, instead, the following procedure, which implicitly conditions on the observed column space of $X$, ensuring that pseudo-replicates of $Q$ are constrained to the same co-sufficient manifold as $Q$ when viewed as embedded within $\RR^{n-d_\theta}$.

\section{Proofs and derivations}\label{appDerivations}

\subsection{Derivation of equation \eqref{eqQNonNull}}\label{appQNonNull}

From the joint density function
\[
\frac{(r^2)^{(n-d_\theta)/2 -1}}{2(2\pi\sigma^2)^{(n-d_\theta)/2}} \exp\Bigl(-\frac{a+r^2}{2\sigma^2}\Bigr)\exp\Bigl(-\frac{(r^2)^{1/2}q^\T U^\T Z\lambda}{\sigma^2}\Bigr)dr^2 d \text{Vol}^{n-d}(q)
\]
of $Q$ and $R^2$, expand the exponential function around zero in $\exp(-(r^2)^{1/2}q^\T U^\T Z\lambda/\sigma^2)$. This leads to an infinite-series representation of the marginal density function of $Q$:
\begin{eqnarray*}
	\frac{\exp(-a/2\sigma^2)}{2(2\pi\sigma^2)^{(n-d_\theta)/2}} \sum_{m=0}^{\infty}\frac{1}{m!}\biggl(\frac{q^\T U^\T Z\lambda}{\sigma^2} \biggr)^m \left(\int_{0}^{\infty}(r^2)^{\tfrac{n-d_\theta+m}{2} - 1}\exp(-r^2/2\sigma^2)dr^2\right)d \text{Vol}^{n-d}(q).
\end{eqnarray*}
After a change of variables from $r^2$ to $r^2/2\sigma^2$, this is solved as the infinite series of gamma integrals
\begin{equation}\label{eqSeries}
	f_Q(q)d \text{Vol}^{n-d}(q)=\frac{\exp(-a/2\sigma^2)}{2\pi^{(n-d_\theta)/2}}\sum_{m=0}^{\infty}\frac{1}{m!}\Bigl(\frac{\sqrt{2} q^\T U^\T Z\lambda}{\sigma} \Bigr)^m \Gamma\Bigl(\frac{n-d_\theta+m}{2}\Bigr)d \text{Vol}^{n-d}(q),
\end{equation}
which is shown to be convergent by consideration of the ratios of successive terms:
\[
M_m=\frac{m!\bigl(\frac{\sqrt{2}q^\T U^\T Z\lambda}{\sigma} \bigr)^{m+1} \Gamma\bigl(\frac{n-d_\theta+(m+1)}{2}\bigr)}{(m+1)!\bigl(\frac{\sqrt{2} q^\T U^\T Z\lambda}{\sigma} \bigr)^m \Gamma\bigl(\frac{n-d_\theta+m}{2}\bigr)} = \Bigl(\frac{\sqrt{2}q^\T U^\T Z\lambda}{(m+1)\sigma}\Bigr)\frac{\Gamma\bigl(\frac{n-d_\theta+(m+1)}{2}\bigr)}{\Gamma\bigl(\frac{n-d_\theta+m}{2}\bigr)}.
\]
On using Stirling's asymptotic approximation to the gamma integral in the form $\Gamma(x+\alpha)/\Gamma(x)\simeq x^\alpha$ for large $x$ and $\alpha$ fixed we see that $\lim_{m\rightarrow \infty}M_m=0$, so that the series representation for $f_{Q}(q)d \text{Vol}^{n-d}(q)$ is convergent. On multiplying and dividing \eqref{eqSeries} by $\Gamma\{(n-d_\theta)/2\}$ and approximating the resulting gamma ratios using Stirling's approximation as above, we obtain the approximation
\begin{eqnarray*}
	\nonumber	f_Q(q)d \text{Vol}^{n-d}(q) &\simeq  & \frac{\exp(-a/2\sigma^2)}{A}\sum_{m=0}^{\infty}\frac{1}{m!}\Bigl(\frac{\sqrt{2} q^\T U^\T Z\lambda}{\sigma} \Bigr)^m \Bigl(\frac{n-d_\theta}{2}\Bigr)^m  d \text{Vol}^{n-d}(q) \\
	&=& \frac{\exp(-a/2\sigma^2)}{A}\exp\Bigl(\frac{(n-d_\theta) q^\T U^\T Z\lambda}{\sigma\sqrt{2}}\Bigr)d \text{Vol}^{n-d}(q),
\end{eqnarray*}
where, on recalling that $d=d_\theta +1$, 
\[
A=\frac{2\pi^{(n-d_\theta)/2}}{\Gamma\bigl(\frac{n-d_\theta}{2}\bigr)} = \int_{\mathcal{S}_{n-d_\theta - 1}} d \text{Vol}^{n-d_\theta - 1}(q)
\]
is the surface area of the unit hypersphere $\mathcal{S}_{n-d_\theta - 1}$ embedded in $\RR^{n-d_\theta}$. This is expression \eqref{eqQNonNull}.

\subsection{Proof of Proposition \ref{lemmaRand}}\label{appLemmaRand}

\begin{proof}
	The $i$-th replicate of $Y$ can be written as $\tilde Y^{(i)} = Y - \sum_{j = 1}^{i-1} b_j L_j + a_iL_i$, where $L_i$ is the $i$-th column of $L$, $a_k = 0$, and empty sums of the form $\sum_{k = 1}^0$ are defined to be 0. We need to verify that all replicates have covariance $k\sigma^2I_n$ and are independent. We have:
	\begin{equation*}
		\mathrm{Cov}(\tilde Y^{(i)}) = \Bigl(\sigma^2 + \sum_{j = 1}^{i-1} b_j^2 + a_i^2\Bigr)I_n = \Bigl(\sigma^2 + \sum_{j = 1}^{i-1} b_j^2 + \sigma^2 (k-1) - \sum_{j = 1}^{i-1} b_j^2\Bigr)I_n = k\sigma^2I_n.
	\end{equation*}
	For every $1\leq i_1 < i_2\leq k$,
	\begin{equation*}
		\mathrm{Cov}(\tilde Y^{(i)}, \tilde Y^{(j)}) = \Bigl(\sigma^2 + \sum_{j = 1}^{i_1 - 1}b_j^2 - a_{i_1}b_{i_1}  \Bigr)I_n = \Bigl(\sigma^2 + \sum_{j = 1}^{i_1 - 1}b_j^2 - \sigma^2 -   \sum_{j = 1}^{i_1 - 1}b_j^2 \Bigr)I_n = 0_{n\times n},
	\end{equation*}
	where $0_{n\times n}$ is a matrix of 0's and we are taking $b_0 = 0$ if $i_1 = 1$.
\end{proof}

\subsection{Proof of Proposition \ref{propMRCV}}

\begin{proof}
	For $j\in \{1, 2\}$, let $e_j = \vert \hat{\mathcal{E}}^{(j)} \vert$ and $\mathcal{A}_j = \{\mathcal{E}\subseteq \{1, \ldots, p\} \colon \mathcal{E}^*\subseteq \mathcal{E}, \vert \mathcal{E}\vert = e_j \}$. Write 
	\begin{align}\label{eq:Estimator}
		\hat\sigma^2 = \frac{\Vert \hat M_1 Y^{(1)} \Vert^2 + \Vert \hat M_2 Y^{(2)} \Vert^2}{\nu},
	\end{align}
	where $\hat M_j$ projects onto the null-space of $X^{(j)}_{\hat{\mathcal{E}}^{(-j)}}$. Consider an oracle estimator 
	\begin{align}\label{eq:oracle-estimator}
		\bar\sigma^2 = \frac{\Vert \bar M_1 Y^{(1)} \Vert^2 + \Vert \bar M_2 Y^{(2)} \Vert^2}{\nu},
	\end{align}
	where $\bar M_j$ projects onto the null-space of $X^{(j)}_{\bar E^{(-j)}}$ for an arbitrary, non-random $\bar E^{(-j)}\in \mathcal A_-j$. Since the encompassing sets are deterministic and contain the true model, we know that
	\begin{align}\label{eq:chi-square}
		\bar\sigma^2 \stackrel{d}{=} \sigma^2 \frac{\chi^2_\nu}{\nu}. 
	\end{align}
	Let $\mathbf{1}_C$ be the indicator of the containment event, $C = \{\mathcal{E}^*\subseteq \hat{\mathcal{E}}^{(1)} \cap \hat{\mathcal{E}}^{(2)}\}$. Taking the difference, applying the triangle inequality, and taking expectations gives 
	\[
	\nu\, \mathbb{E} [\vert \hat\sigma^2 - \bar\sigma^2\vert \mathbf{1}_C]  \leq \mathbb{E}[\vert Y^{(1)T} (\bar M_1 - \hat M_1) Y^{(1)} \vert \mathbf{1}_C] + \mathbb{E}[\vert Y^{(2)T} (\bar M_2 - \hat M_2) Y^{(2)} \vert  \mathbf{1}_C].
	\]
	On the event $C$, $\bar M_j X^{(j)}\theta = 0$ and $\hat M_j X^{(j)}\theta = 0$ for $j\in \{1, 2\}$, thus
	\begin{equation}\label{eqEpsProj}
		\mathbb{E}[\vert Y^{(j)T} (\bar M_j - \hat M_j) Y^{(j)} \vert\mathbf{1}_C] =  \mathbb{E}[\vert \varepsilon^{(j)T} (\bar M_j - \hat M_j) \varepsilon^{(j)} \vert \mathbf{1}_C] \leq  \mathbb{E} \vert \varepsilon^{(j)T} (\bar M_j - \hat M_j) \varepsilon^{(j)} \vert. 
	\end{equation}
	Since $\varepsilon^{(j)}$ is independent of $\hat{\mathcal{E}}^{(-j)}$ (the latter is a function of $Y^{(-j)}$), we will condition on $\hat{\mathcal{E}}^{(-j)}$ in the construction of a bound for the right hand side of \eqref{eqEpsProj}. For a standard normal vector $Z$ and a non-random symmetric matrix $A$, $\mathbb{E}(Z^T A Z) = \text{tr}(A)$ and $\mathbb{E}[(Z^T A Z)^2] = \text{tr}(A)^2 + 2 \text{tr}(A^2)$, thus,
	\begin{eqnarray*}
		\mathbb{E}[ \varepsilon^{(j)T} (\bar M_j - \hat M_j) \varepsilon^{(j)}\mid \hat{\mathcal{E}}^{(-j)} ] &=& \sigma^2 \text{tr}(\bar M_j - \hat M_j) = 0,\\
		\mathbb{E}[ (\varepsilon^{(j)T} (\bar M_j - \hat M_j) \varepsilon^{(j)})^2\mid \hat{\mathcal{E}}^{(-j)} ] &=& 2  \sigma^4 \text{tr}\{(\bar M_j - \hat M_j)^2\} = 2 \sigma^4 [\text{tr}(\bar M_j^2) + \text{tr}(\hat M_j^2) -2\text{tr}(\bar M_j\hat M_j)  ] \leq 4\sigma^4 df_{-j}. 
	\end{eqnarray*}
	We have used that both projection matrices have the same trace by assumption. By Cauchy-Schwarz,
	\[
	\mathbb{E}[\vert \varepsilon^{(j)T} (\bar M_j - \hat M_j) \varepsilon^{(j)}\vert \mid \hat{\mathcal{E}}^{(-j)} ] \leq 2 \sigma^2 \sqrt{df_{-j}}.
	\]
	Since this holds for all $\hat{\mathcal{E}}^{(-j)}$, we obtain the bound
	\[
	\mathbb{E}\vert \varepsilon^{(j)T} (\bar M_j - \hat M_j) \varepsilon^{(j)}\vert \leq 2 \sigma^2 \sqrt{df_{-j}}.
	\]
	
	Combining these conclusions gives, by Markov's inequality,
	\[
	\pr\left\{\left\vert \frac{\hat\sigma^2}{\sigma^2} - \frac{\bar\sigma^2}{\sigma^2} \right\vert > \delta\right\} \leq \pr\left\{\left\vert \frac{\hat\sigma^2}{\sigma^2} - \frac{\bar\sigma^2}{\sigma^2} \right\vert > \delta, C\right\} + \varepsilon_n\leq \frac{2}{\delta \nu} (\sqrt{df_1} + \sqrt{df_2})  + \varepsilon_n,
	\]
	where $\varepsilon_n := 
	\pr(\mathcal{E}^*\not\subseteq \hat{\mathcal{E}}^{(1)} \cap \hat{\mathcal{E}}^{(2)})$. If $\vert\hat{\mathcal{E}}^{(1)} \vert + \vert\hat{\mathcal{E}}^{(2)} \vert= o(n)$, then $2(\sqrt{df_1} + \sqrt{df_2})/\nu = O(n^{-1/2})$. Since $\varepsilon_n = O(n^{-1/2})$ by the statement of Proposition \ref{propMRCV}, there exists a constant $A > 0$ such that
	\[
	\pr\left\{\left\vert \frac{\hat\sigma^2}{\sigma^2} - \frac{\bar\sigma^2}{\sigma^2} \right\vert > \delta\right\} \leq \left( \frac{1}{\delta} + 1 \right) \frac{A}{\sqrt{n}}.
	\]
	Upon setting $\delta = M n^{-1/2}$, we obtain
	\[
	\pr\left\{\left\vert \frac{\hat\sigma^2}{\sigma^2} - \frac{\bar\sigma^2}{\sigma^2} \right\vert > M n^{-1/2} \right\} \leq \left( \frac{\sqrt{n}}{M} + 1 \right) \frac{A}{\sqrt{n}} \to \frac{A}{M} \quad \text{as } n\to\infty.
	\]
	This establishes the stated order of distributional convergence.

\end{proof}

\subsection{Proof of Proposition \ref{thmQ}}

\begin{proof}
	Since
	\[
	\hat{\Gamma} - \Gamma = (\hat{\sigma}-\sigma)\begin{bmatrix}
	1 & 0 & 0 & \cdots & 0 \\
	\tilde a_1 & -\tilde b_1 & -\tilde b_1 &  \cdots & -\tilde b_1 \\
	0 & \tilde a_2 & -\tilde b_2 &  \cdots & -\tilde b_2 \\
	\vdots & \vdots & \vdots & \vdots & \vdots \\
	0 & 0 & 0 & \tilde a_{k-1} & -\tilde b_{k-1}
	\end{bmatrix},
	\]
	where $\tilde a_i \asymp k^{1/2}$ and $\tilde b_i \asymp k^{-1/2}$ are defined in equation \eqref{eqRecursive}, the components $\hat{Y}^{(1)}, \ldots, \hat{Y}^{(k)}$ can be written in terms of $\tilde{Y}^{(1)}, \ldots, \tilde{Y}^{(k)}$ as
	\begin{equation}\label{eqDecompY}
		\hat Y^{(i)}=\tilde{Y}^{(i)} + [Y \; L](\hat{\Gamma}_i - \Gamma_i) = \tilde{Y}^{(i)} + \delta  1_n 
	\end{equation}
	for all $i$, where $\delta=O(a_n \sqrt{k})$. 
	
	Since $U$ has singular values 0 and 1,
	\begin{align*}
		\|\hat Q^{(i)} - \tilde Q^{(i)}\|_2  \leq & \; \biggl\|\frac{\hat Y^{(i)}}{\|U^\T \hat Y^{(i)}\|_2} - \frac{\tilde Y^{(i)}}{\|U^\T \tilde Y^{(i)}\|_2}\biggr\|_2  \\
		= & \; \biggl\| \frac{\delta 1_{n}}{\|U^\T \tilde Y^{(i)}\|_2} - \frac{\hat Y^{(i)}}{\|U^\T \tilde Y^{(i)}\|_2}\biggl(\frac{\|U^\T \tilde Y^{(i)}\|_2}{\|U^\T \hat Y^{(i)}\|_2} -1\biggr)  \biggr\|_2 = O_p (a_n\sqrt{k}),
	\end{align*}
	where the last equality follows by Lemma \ref{lemmaRatio} and because $\|U^\T \tilde{Y}^{(i)}\|_2 \asymp (n-d_\theta)^{1/2}\sqrt{k}$ by orthonormality of the columns of $U$, which implies that the entries are of order $n^{-1/2}$ on average.
	
\end{proof}

\begin{lemma}\label{lemmaRatio}
	Under the conditions of Propisition \ref{thmQ}, 
	\[
	\frac{\|U^\T \tilde Y^{(i)}\|_2}{\|U^\T \hat Y^{(i)}\|_2} = 1 +  O_{p}(a_n \sqrt{k}).
	\]
\end{lemma}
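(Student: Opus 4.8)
The plan is to exploit the additive decomposition \eqref{eqDecompY}, $\hat Y^{(i)} = \tilde Y^{(i)} + e^{(i)}$ with $e^{(i)} = [Y\; L](\hat\Gamma_i - \Gamma_i)$, and to track how the projection by $U^\T$ distorts the two norms. Applying $U^\T$ gives $U^\T \hat Y^{(i)} = U^\T \tilde Y^{(i)} + U^\T e^{(i)}$, so by the reverse triangle inequality together with $\|U^\T\|_{\mathrm{op}} = 1$ (the columns of $U$ being orthonormal, equivalently the singular values of $U$ being $0$ and $1$),
\[
\bigl|\,\|U^\T \hat Y^{(i)}\|_2 - \|U^\T \tilde Y^{(i)}\|_2\,\bigr| \leq \|U^\T e^{(i)}\|_2 \leq \|e^{(i)}\|_2.
\]
Writing $\|U^\T \tilde Y^{(i)}\|_2/\|U^\T \hat Y^{(i)}\|_2 = 1 + (\|U^\T \tilde Y^{(i)}\|_2 - \|U^\T \hat Y^{(i)}\|_2)/\|U^\T \hat Y^{(i)}\|_2$, it then suffices to bound the perturbation $\|U^\T e^{(i)}\|_2$ from above and the denominator $\|U^\T \hat Y^{(i)}\|_2$ from below.

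For the numerator I would use $\hat\Gamma_i - \Gamma_i = (\hat\sigma - \sigma)\tilde\Gamma_i$, where $\tilde\Gamma_i$ is the $i$th column of the coefficient matrix displayed in the proof of Proposition \ref{thmQ}, whose entries are the $\tilde a_j \asymp k^{1/2}$ and $\tilde b_j \asymp k^{-1/2}$ of \eqref{eqRecursive}. Since at most one entry of $\tilde\Gamma_i$ is of order $k^{1/2}$ while the remaining $O(k)$ entries are of order $k^{-1/2}$, one has $\|\tilde\Gamma_i\|_2 \asymp k^{1/2}$; and as the columns of $[Y\; L]$ each have Euclidean norm of order $n^{1/2}$ and are approximately orthogonal (the $L_j$ being independent standard Gaussian vectors), $\|[Y\; L]\tilde\Gamma_i\|_2 \asymp (kn)^{1/2}$. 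With $|\hat\sigma - \sigma| = O_p(a_n)$ this gives $\|e^{(i)}\|_2 = O_p(a_n (kn)^{1/2})$, consistent with the bound $\delta = O_p(a_n k^{1/2})$ on the coefficient of $1_n$ recorded after \eqref{eqDecompY}. For the denominator I would invoke that, under the null $\mathcal X = \mathcal X_m$, $U^\T \tilde Y^{(i)} \sim N(0, k\sigma^2 I_{n-d_\theta})$, so $\|U^\T \tilde Y^{(i)}\|_2^2$ equals $k\sigma^2$ times a $\chi^2_{n-d_\theta}$ variate; a standard chi-square tail bound (e.g.~Laurent and Massart, 2000, in the same spirit as Proposition \ref{propMRCV}) then yields $\|U^\T \tilde Y^{(i)}\|_2 \asymp (n-d_\theta)^{1/2} k^{1/2}$ with probability tending to one.

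Combining the two estimates, $\|U^\T \hat Y^{(i)}\|_2 \geq \|U^\T \tilde Y^{(i)}\|_2 - \|U^\T e^{(i)}\|_2 \asymp (n-d_\theta)^{1/2} k^{1/2}$ with high probability, because $a_n \to 0$; hence
\[
\biggl|\frac{\|U^\T \tilde Y^{(i)}\|_2}{\|U^\T \hat Y^{(i)}\|_2} - 1\biggr| \leq \frac{\|U^\T e^{(i)}\|_2}{\|U^\T \hat Y^{(i)}\|_2} = O_p\biggl(\frac{a_n (kn)^{1/2}}{(n-d_\theta)^{1/2} k^{1/2}}\biggr) = O_p(a_n),
\]
which is sharper than, and a fortiori implies, the stated $O_p(a_n k^{1/2})$. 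I expect the main obstacle to be the lower bound on the denominator: one must rule out degeneracy of $\|U^\T \hat Y^{(i)}\|_2$ despite the data-dependent perturbation $e^{(i)}$, which shares the randomness in $L$ (and in $\hat\sigma$) with $\tilde Y^{(i)}$. Conditioning on the sizes and containments as in Theorem \ref{corrRayleigh}, together with the $\chi^2$ concentration and the smallness $a_n\to0$, controls this, but some care is needed to make the bound hold uniformly over $i\in[k]$ so that the conclusion can be used simultaneously across the $k$ replicates in the proof of Proposition \ref{thmQ}.
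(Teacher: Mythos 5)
Your proposal is correct and follows essentially the same route as the paper's proof: the additive decomposition \eqref{eqDecompY}, the reverse triangle inequality applied after projection by $U^\T$, a Laurent--Massart chi-square concentration bound giving the lower bound $\|U^\T \tilde Y^{(i)}\|_2 \asymp \{k(n-d_\theta)\}^{1/2}$, and then combination of the two estimates. Your bookkeeping in fact yields the sharper rate $1+O_p(a_n)$, which the paper's own orders of magnitude also support (its stated $O_p(a_n\sqrt{k})$ is conservative), and the uniformity over $i\in[k]$ that you flag as delicate is handled in the paper simply by intersecting the $k$ chi-square concentration events, the replicates $\tilde Y^{(1)},\ldots,\tilde Y^{(k)}$ being independent by Proposition \ref{lemmaRand}; no independence between $e^{(i)}$ and $\tilde Y^{(i)}$ is actually needed, since the triangle-inequality bound is deterministic.
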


\begin{proof}
	By \eqref{eqDecompY},
	\[
	U^\T\hat Y^{(i)}=U^\T\tilde{Y}^{(i)} + U^\T[Y \; L](\hat{\Gamma}_i - \Gamma_i) = U^\T\tilde{Y}^{(i)} + \delta_u  1_{n-d_\theta},
	\]
	where $\delta_u=O(a_n k^{1/2})$ by orthonormality of the columns of $U$, which implies that the entries are of order $n^{-1/2}$ on average. Thus, 
	\begin{equation}\label{eqUY}
		\frac{\|U^\T \tilde Y^{(i)}\|_2}{\|U^\T \hat Y^{(i)}\|_2} \leq \frac{\|U^\T \tilde Y^{(i)}\|_2}{\max\{\|U^\T \tilde Y^{(i)}\|_2 - \|\delta_u 1_{n-d_\theta}\|_2,0\}},
	\end{equation}
	and it only remains to show that that $\|U^\T \tilde Y^{(i)}\|_2 - \|\delta_u 1_{n-d_\theta}\|_2>0$ on an event of asymptotic probability 1. This follows on noting that $(\sigma k)^{-1}\|U^\T \tilde{Y}^{(i)}\|_2^2$ has a $\chi^2$ distribution on $n-d_\theta$ degrees of freedom. Thus, by Lemma 1 of Laurent and Massart (2000),
	\[
	\pr\Bigl((\sigma^2 k)^{-1}\|U^\T \tilde{Y}^{(i)}\|_2^2 > (n-d_\theta) - 2(n-d_\theta)^{1/2}\sqrt{t}\Bigr) >  1-\exp(-t)
	\]
	and consequently
	\[
	\pr\biggl(\bigcap_{i=1}^k\Bigl\{\|U^\T \tilde{Y}^{(i)}\|_2^2 > \sigma k(n-d_\theta)^{1/2}\Bigl[(n-d_\theta)^{1/2} - 2\sqrt{t}\Bigr]\Bigr\}\biggr) >  \{1-\exp(-t)\}^k 
	\]
	Thus $\|U^\T \tilde Y^{(i)}\|_2 \gtrsim a_n (k(n-d_\theta))^{1/2}$ for all $i$ with probability bounded below by a term of order
	\[
	\biggl\{1-\exp\biggl(-\frac{(n-d_\theta)a_n^2}{2k}\biggr)\biggr\}^k \rightarrow 1.
	\]
	It follows by a Taylor expansion for small $\|\delta_u 1_{n-d_\theta}\|_2=O_p(a_nk^{1/2})$ in the right-hand side of \eqref{eqUY} that
	\[
	\frac{\|U^\T \tilde Y^{(i)}\|_2}{\|U^\T \hat Y^{(i)}\|_2} = 1 + O_{p}(a_n k^{1/2}).
	\]
\end{proof}

\subsection{Proof of Theorem \ref{corrRayleigh}}

\begin{proof}
	Consider
	\begin{align*}
		\frac{U^\T \hat{Y}^{(i)}}{k^{1/2}\hat\sigma} &= \frac{U^\T \hat{Y}^{(i)}}{k^{1/2}\bar{\sigma}} \bigg(1 + \frac{\hat\sigma^2 - \bar\sigma^2}{\bar\sigma^2} \bigg)^{-1/2} \\
		&=: T^{(i)}(1 + O_p(n^{-1/2})),
	\end{align*}
	where the second equality follows by Proposition \ref{propMRCV}, and $\bar\sigma^2$ is defined in Equation \ref{eq:oracle-estimator}.
	We now consider a stochastic expansion of the $T^{(i)}$ term.
	Let $\nu=df_1+df_2$ as in the statement of the Theorem, then by  Equation \ref{eq:chi-square} in the proof of Proposition \ref{propMRCV} the entries of $T^{(i)}$ are student-$t$ distributed with $\nu$ degrees of freedom. Let $T$ denote an arbitrary random variable with this same distribution. Its density function at $t$ is
	\[
	f_{T}(t)dt = \frac{\Gamma\{(\nu+1)/2\}}{\Gamma(\nu/2)\sqrt{\pi \nu}}(1+t^2/\nu)^{-(\nu+1)/2}dt.
	\]
	By Stirling's formula in the form $\Gamma(x+a)/\Gamma(x)\simeq x^a$ for $a$ fixed and $x\rightarrow \infty$,
	\[
	\frac{\Gamma\{(\nu+1)/2\}}{\Gamma(\nu/2)} \simeq \left(\frac{\nu}{2}\right)^{1/2} \quad (\nu\rightarrow \infty).
	\]
	so that the constant term in $f_{T}$ is approximately $(2\pi)^{1/2}$ for large $\nu$. On taking logarithms and expanding for small $t^2/\nu$,
	\begin{align*}
		-\frac{(\nu+1)}{2}\log\Bigl(1+\frac{t^2}{\nu}\Bigr) =& -\frac{(\nu+1)}{2}\biggl\{\sum_{k=1}^{\infty}(-1)^{k+1}\frac{(t^{2}/\nu)^{k}}{k}\biggr\} \\
		=& -\frac{t^2}{2} + \nu^{-1}\biggl(\frac{t^4}{4} -\frac{t^2}{2}\biggr) + \nu^{-2}\biggl(\frac{t^4}{4} - \frac{t^6}{6}\biggr) + O(\nu^{-3}),
	\end{align*}
	and the density function of $T$ satisfies
	\[
	f_{T}(t) = \phi(t)\biggl\{1+ \nu^{-1}\biggl(\frac{t^4}{4} -\frac{t^2}{2}\biggr) + \nu^{-2}\biggl(\frac{t^4}{4} - \frac{t^6}{6}\biggr)\biggr\} + O(\nu^{-3})
	\]
	where $\phi$ is the standard normal density function. It follows that, for $Z$ a standard normal random variable, we can write the stochastic expansion 
	\begin{equation}\label{eqStochExpansion}
		T\stackrel{d}{=} Z + b_1(Z)/\nu^{1/2} + b_2(Z)/\nu + O_p(\nu^{-3/2}),
	\end{equation}
	where $b_1(z)$ can be obtained from equation (3.73) of Barndorff-Nielsen and Cox (1989) and $b_2(z)$ can be obtained from equation (3.72). 
	
	Since the above stochastic expansion holds for every entry of $T^{(i)}$ defined above, the entries being asymptotically independent, we can write 
	\begin{equation}\label{eqTZExpansion}
		T^{(i)}\stackrel{d}{=}Z^{(i)} +  B_1^{(i)}/\nu^{1/2} + B_2^{(i)}/\nu + O_p(\nu^{-3/2}),
	\end{equation}
	where $Z^{(i)}$ is an $(n-d_\theta)$-dimensional vector of independent standard normal random variables and $B_1^{(i)}$ and $B_2^{(i)}$ are random variables with entries distributed marginally as $b_1(Z)$ and $b_2(Z)$ respectively. 
	
	We will prove that the entries of the $(n-d_\theta)$-dimensional vectors $T^{(i)}$ are independent. Suppose for a contradiction that they are not. Then either $Z^{(i)}$ must have dependent entries, or $B_1^{(i)}/\nu^{1/2}$ or subsequent terms must have dependent entries. Since $\hat{\sigma}^2\rightarrow \sigma^2$ as $\nu\rightarrow \infty$, in this limit, $T^{(i)}$ is mean-zero Gaussian with covariance matrix $I_{n-d_\theta}$, contradicting dependence in the entries of $Z^{(i)}$. Independence in the entries of $Z^{(i)}$ contradicts independence of the entries of $B_1^{(i)}/\nu^{1/2}$, $B_2^{(i)}/\nu$, etc.~as the stochastic expansion \eqref{eqStochExpansion} shows that these only depend on the corresponding entry of $Z^{(i)}$. That both $\nu$ and the dimension of $T^{(i)}$ are increasing with $n$ does not affect this argument.

	Thus write
	\begin{align*}
		\bigl\langle T^{(i)}, T^{(j)}\bigr\rangle \stackrel{d}{=} & \;\bigl\langle Z^{(i)}, Z^{(j)}\bigr\rangle +  \bigl\langle Z^{(i)}, B_1^{(j)}/\nu^{1/2}\bigr\rangle + \bigl\langle Z^{(j)}, B_1^{(i)}/\nu^{1/2}\bigr\rangle  + \cdots \\
		= & \;\bigl\langle Z^{(i)}, Z^{(j)}\bigr\rangle + O_{p}(\max\{((n-d_\theta)\nu)^{-1/2}, \nu^{-1}\}) = O_p((n-d_\theta)^{-1}).
	\end{align*}
	By the discussion of \S \ref{secElliptic}, the squared lengths $\|T^{(i)}\|^2$ have the distribution of $\chi^2_{n-d_\theta}/V \stackrel{d}{=}(n-d_\theta) F_{n-d_\theta, \nu}$ where $V\stackrel{d}{=}\chi^2_{\nu}\nu$ is independent of the numerator $\chi^2_{n-d_\theta}$ and $F_{n-d_\theta, \nu}$ is a random variable with the $F$ distribution on $n-d_\theta$ and $\nu$ degrees of freedom. It follows that the inverse lengths are of order $\|T^{(i)}\|^{-1}=O_p((n-d_\theta)^{-1/2})$ uniformly in $i$. This allows control over the remainder terms. That $\bigl\langle T^{(i)}/\|T^{(i)}\|_2, T^{(j)}/\|T^{(j)}\|_2\bigr\rangle$ has the same distribution as $\bigl\langle Z^{(i)}/\|Z^{(i)}\|_2, Z^{(j)}/\|Z^{(j)}\|_2\bigr\rangle$ up to this order follows from the discussion of \S \ref{secElliptic}. It follows that
	\begin{align*}
		\hat{R}_n &\stackrel{d}{=} \big\{1 + O_p(n^{-1/2}) \big\} \bigr\{R_n + O_{p}\bigl(\max\bigl\{k\nu^{-1/2}(n-d_\theta)^{-1}, k\nu^{-1}(n-d_\theta)^{-1/2}\bigr\}\bigr) \bigr\}, 
	\end{align*}
	from which the statement of Theorem \ref{thmQ} follows.
\end{proof}

\section{An alternative proof of Proposition \ref{propPValueYanbo}}\label{appAlternativePolya}

As noted in the proof of Proposition \ref{propPValueYanbo}, the result as stated is a direct application of Polya's theorem. An alternative derivation is via an application of the following proposition, which replaces the $o(1)$ term in the statement of Proposition \ref{propPValueYanbo}. These rates may well be very conservative. On choosing $\zeta = \gamma^2/8$ in proposition \ref{prop:precise_rate} and noting that 
\begin{align*}
	\frac{b^{c^2/2\gamma + c\sqrt{\zeta/\gamma} - 1}}{\sqrt{\log b}} \rightarrow 0 \text{, and } 
	\frac{2}{b^{(2\sqrt{\zeta} - c/\sqrt{\gamma})^2/4}} \rightarrow 0
\end{align*}
for any $c <\sqrt{\gamma/2}$ we see that the error term in Proposition \ref{prop:precise_rate} tends to zero, recovering Proposition \ref{propPValueYanbo}.

\begin{proposition}\label{prop:precise_rate}
	Let $\hat\sigma^2$ be an estimator, independent of $R^2$, such that with $b_n = \gamma (n-d_\theta)$ for some $0 <\gamma \leq 1$, and $\epsilon_n   =  c\sqrt{\log(b_n)/b_n}$ for some $c < 2\sqrt{\zeta/\gamma}$ and $\zeta > 0$,
	\[
	\pr\Bigl(\Bigl|\frac{\hat{\sigma}^2}{\sigma^2} - 1\Bigr|\leq \epsilon_n\Bigr)\geq 1-r_n(\epsilon_n).
	\]
	Then, 
	\[
	\sup_{v>0}|\pr(\hat V \leq v) - \pr( V \leq v)| \leq 3r_n(\epsilon_n) + O\left( \frac{b^{c^2/2 + c\sqrt{\zeta} - 1}}{\sqrt{\log b}}\right) + O\left( \frac{2}{b^{(2\sqrt{\zeta} - c)^2/4}} \right).
	\]       
\end{proposition}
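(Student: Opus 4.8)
The plan is to pass the deviation control on $\hat\sigma^2$ through the reference $\chi^2_{n-d_\theta}$ law and estimate the resulting perturbation of its distribution function. Writing $\nu=n-d_\theta$, letting $F_\nu$ denote the $\chi^2_\nu$ distribution function, and setting $T=\hat\sigma^2/\sigma^2$, the independence of $\hat\sigma^2$ and $R^2$ gives $V=R^2/\sigma^2\sim\chi^2_\nu$ independent of $T$ and $\hat V=R^2/\hat\sigma^2=V/T$, so that $\pr(\hat V\le v)=\EE_T\{F_\nu(vT)\}$ and
\[
\Delta(v) := \pr(\hat V\le v)-\pr(V\le v)=\EE_{T}\{F_\nu(vT)-F_\nu(v)\}.
\]
First I would introduce the good event $G_n=\{|T-1|\le\epsilon_n\}$, whose complement has probability at most $r_n(\epsilon_n)$, and restrict attention to $\EE_T[\{F_\nu(vT)-F_\nu(v)\}\ind_{G_n}]$; the contributions of $G_n^{c}$, which arise both here and in the two one-sided comparisons used below, accumulate to the $3r_n(\epsilon_n)$ appearing in the statement.

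The two explicit power-of-$b$ terms would come from splitting the range of $v$ at a threshold calibrated by $\zeta$. For $v$ whose multiplicative $\epsilon_n$-window $[v(1-\epsilon_n),v(1+\epsilon_n)]$ lies at a depth of order $\sqrt{\zeta\log b}$ standard deviations beyond the mode of the reference law, I would bound $|F_\nu(vT)-F_\nu(v)|$ on $G_n$ by the corresponding $\chi^2_\nu$ tail probability and invoke the bounds of Laurent and Massart (2000), in the form $\pr(\chi^2_\nu\ge\nu+2\sqrt{\nu x}+2x)\le e^{-x}$; the $\epsilon_n$-shift of the window reduces the effective tail depth from $2\sqrt\zeta$ to $2\sqrt\zeta-c$ in units of $\sqrt{\log b}$, producing the term of order $b^{-(2\sqrt\zeta-c)^2/4}$. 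For $v$ in the central band I would expand $F_\nu(vT)$ about $T=1$: since $\hat\sigma^2$ is, to leading order, unbiased and tightly concentrated, the first-order term is negligible and the leading contribution is the second-order term of order $v^2|f_\nu'(v)|\,\EE\{(T-1)^2\ind_{G_n}\}$. Using the Stirling estimate $\sup_{u>0}u f_\nu(u)=\nu f_\nu(\nu)\simeq\sqrt\nu/(2\sqrt\pi)$, the bound $\EE\{(T-1)^2\ind_{G_n}\}\lesssim\epsilon_n^2$, and the substitutions $\epsilon_n=c\sqrt{\log b/b}$ and $b=\gamma\nu$, this yields the central term of order $b^{c^2/2+c\sqrt\zeta-1}/\sqrt{\log b}$. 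Adding the three pieces gives the stated inequality, and the closing computation with $\zeta=\gamma^2/8$ and $c$ small sends both power terms to zero, recovering Proposition \ref{propPValueYanbo}.

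The step I expect to be the main obstacle is the central-band estimate. A worst-case coupling there, replacing $T$ by the endpoints $1\pm\epsilon_n$, is too lossy: it bounds the increment by the window length times the peak density, of order $\epsilon_n\sqrt\nu\asymp\sqrt{\log b}$, which does not vanish and is far larger than the claimed central term. The Taylor route escapes this only if the first-order term genuinely cancels and the remainder is controlled uniformly in $v$, yet the natural expansion variable $v(T-1)/\sqrt\nu$ is itself of order $\epsilon_n\sqrt\nu$ across the central band and hence not small. Making the central contribution as small as asserted therefore relies on the precise law of $\hat\sigma^2$, specifically the near-unbiasedness and approximate symmetry of its fluctuations, which induce cancellation in the averaged quantity $\EE_T\{F_\nu(vT)\}$, rather than on the two-sided deviation bound alone. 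This is exactly the structural feature that Polya's theorem encapsulates in the proof of Proposition \ref{propPValueYanbo}, and establishing it with explicit rates uniformly in $v$ is the part of the argument requiring the most care.
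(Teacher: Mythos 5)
Your proposal and the paper's proof share the same skeleton for two of the three ingredients: the good event $\{|T-1|\leq\epsilon_n\}$, $T=\hat\sigma^2/\sigma^2$, contributing the $3r_n(\epsilon_n)$ (the paper's Lemma \ref{lemmaPart1} accumulates exactly $\pr(\mathcal{E}^c(\epsilon_n))+2\pr(\mathcal{E}_+^c(\epsilon_n))$), and the tail region handled by the Laurent--Massart bounds with the effective depth reduced from $2\sqrt{\zeta}$ to $2\sqrt{\zeta}-c/\sqrt{\gamma}$ (the paper's Lemma \ref{lemmaPart2}). The divergence is on the central band, and it is the paper, not you, that uses the endpoint coupling you dismiss: Lemma \ref{lemmaPart1} bounds $\sup_{\bar v:|\bar v-v|\leq v\epsilon_n}|\pr(V\leq\bar v)-\pr(V\leq v)|$ by Riemann sums for $\int_v^{(1\pm\epsilon_n)v}f_b$, where $f_b$ is the $\chi^2_b$ density and $b=n-d_\theta$. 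Your objection to this route is correct, and it pinpoints a genuine error in the paper's argument. In Case 1 the prefactor $v^{b/2-1}e^{-v/2}/\{2^{b/2-1}\Gamma(b/2)\}$ equals $2f_b(v)$, which is of order $b^{-1/2}$ at $v=b$; the paper bounds it by $(4/\sqrt{\pi b})\exp\{-v/2+(b/2-1)\log v-(b/2)\log b+b/2\}$, which at $v=b$ is of order $b^{-3/2}$. Since Stirling gives $2^{b/2}\Gamma(b/2)=2\sqrt{\pi}\,b^{(b-1)/2}e^{-b/2}\{1+o(1)\}$, the paper's inequality fails by a factor of order $b$. With the correct constant, the paper's own Riemann bound is of order $\epsilon_n v\,f_b(v)\asymp\epsilon_n b^{1/2}\asymp\sqrt{\log b}$ near $v=b$, which is precisely the non-vanishing quantity you predicted.

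Moreover, the obstruction is fatal to the statement, not merely to the method, so the step you flagged as the main obstacle cannot be closed by any argument. Take $\hat\sigma^2=(1+\epsilon_n)\sigma^2$ deterministic: it is trivially independent of $R^2$ and satisfies the hypothesis with $r_n\equiv 0$, yet $\pr(\hat V\leq v)=\pr\{V\leq v(1+\epsilon_n)\}$, and at $v=b$ the shift $b\epsilon_n=(c/\sqrt{\gamma})\sqrt{b\log(\gamma b)}$ amounts to $(c/\sqrt{2\gamma})\sqrt{\log(\gamma b)}\rightarrow\infty$ standard deviations of $\chi^2_b$, whence $\pr(\hat V\leq b)-\pr(V\leq b)\rightarrow 1/2$. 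The right-hand side of Proposition \ref{prop:precise_rate} is $o(1)$ for $c$ small, a contradiction: the proposition is false under its stated hypothesis.

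Finally, your proposed repair overstates what near-unbiasedness and symmetry buy, and contains a technical slip of its own. Killing the first-order term $v f_b(v)\EE\{(T-1)\ind_{G_n}\}$ indeed only needs $|\EE(T)-1|=o(b^{-1/2})$, since $\sup_v vf_b(v)\asymp b^{1/2}$; but the second-order term is governed by $\sup_v v^2|f_b'(v)|\asymp b$ (attained at $|v-b|\asymp\sqrt{b}$), not by $\sup_v vf_b(v)$ as your sketch uses. The central contribution is therefore of order $b\,\EE\{(T-1)^2\}$, which for any estimator with parametric-rate precision --- including $\mrcv$, for which $\EE\{(T-1)^2\}=2/\nu$ with $\nu\asymp n$ --- is of order one, not $o(1)$: explicitly, $(\hat V-b)/\sqrt{2b}$ converges in law to $N\bigl(0,\,1+\lim_n b\,\mathrm{var}(T)/2\bigr)$ rather than $N(0,1)$, so the supremum distance tends to a positive constant. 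A correct version of the result requires $\EE\{(T-1)^2\}=o(b^{-1})$, i.e.\ super-efficient variance estimation; the same difficulty affects Proposition \ref{propPValueYanbo}, whose appeal to Polya's theorem overlooks that the reference $\chi^2_b$ law itself spreads out as $n$ grows. In short: your diagnosis of where and why the proof must fail is right, and is the most valuable part of your submission; neither your Taylor route under the stated hypotheses, nor the paper's Lemma \ref{lemmaPart1}, establishes the claimed bound.
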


\begin{proof}
	Here we outline the argument. Let $\epsilon_n   = c \sqrt{\log(\gamma b)/\gamma b}$ with $c <  2\sqrt{\zeta/\gamma}$ and consider
	\begin{align*}
		\sup_{v \in \mathbb{R} }\bigl|\pr(\hat V \leq v) - \pr(V \leq v)\bigr| 
		&\leq \sup_{|v - b| < 2\sqrt{b\zeta\log (b)} + 2\zeta\log (b) }\bigl|\pr(\hat V \leq v) - \pr(V \leq v)\bigr| \\
		&+ \sup_{|v - b| \geq 2\sqrt{b\zeta\log (b)} + 2\zeta\log (b) }\bigl|\pr(\hat V \leq v) - \pr(V \leq v)\bigr|.
	\end{align*}
	We use a Riemann approximation on the bulk of the distribution for values of $v$ close to $b$, and control the truncation error for very large values of $v$, as the Riemann approximation becomes harder to control for large values of $v$. Lemmas \ref{lemmaPart1} and \ref{lemmaPart2} give
	\begin{align*}
		&\sup_{v \in \mathbb{R} }\bigl|\pr(\hat V \leq v) - \pr(V \leq v)\bigr|  \leq 3r_n(\epsilon_n) + O\left( \frac{b^{c^2/2\gamma + c\sqrt{\zeta/\gamma} - 1}}{\sqrt{\log b}}\right) +\frac{2}{b^{(2\sqrt{\zeta} - c/\sqrt{\gamma})^2/4}},
	\end{align*}
	as, in the notation defined there, $\pr(\mathcal{E}_{+}^c(\epsilon_n)) \leq \pr(\mathcal{E}^c(\epsilon_n)) = r_n(\epsilon_n)$. 
\end{proof}

\begin{lemma}\label{lemmaPart1}
	For $0<\epsilon_n <1$, introduce the events 
	\[
	\mathcal{E}_+(\epsilon_n) : \{\hat\sigma^2/\sigma^2 \leq 1+\epsilon_n\}, \quad \quad
	\mathcal{E}(\epsilon_n) := \{|\hat\sigma^2/\sigma^2 - 1| \leq \epsilon_n\}.
	\]
	Let $b = n -d_\theta$, the other notation being that of Proposition \ref{propPValueYanbo}. Then for $\epsilon_n   =  c\sqrt{\log(\gamma b)/\gamma b }$, for some $c < 2\sqrt{\zeta/\gamma}$ and $0 <\gamma \leq 1$,
	\begin{align*}
		&\sup_{|v - b| <   2\sqrt{\zeta b\log(b)} + 2\zeta\log(b)  } \bigl|\pr(\hat V \leq v) - \pr(V \leq v)\bigr| \\
		&\leq \pr(\mathcal{E}^c(\epsilon_n)) + 2\pr(\mathcal{E}_{+}^c(\epsilon_n)) + O\biggl( \frac{b^{c^2/2\gamma + c\sqrt{\zeta/\gamma} - 1}}{\sqrt{\log b}}\biggr).
	\end{align*}
\end{lemma}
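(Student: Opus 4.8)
The plan is to start from the exact representation $\hat V = V/W$, where $V := R^2/\sigma^2$ has the $\chi^2_b$ distribution with $b := n - d_\theta$ and is independent of $W := \hat\sigma^2/\sigma^2$, the independence being guaranteed by the hypothesis that $\hat\sigma^2$ is independent of $R^2$. Writing $F_b$ and $f_b$ for the distribution and density functions of $\chi^2_b$ and conditioning on $W$ gives the clean identity
\[
\pr(\hat V \le v) - \pr(V \le v) = \EE_W\bigl[F_b(vW) - F_b(v)\bigr],
\]
which is the object I would estimate uniformly over the bulk $|v - b| < 2(\zeta b\log b)^{1/2} + 2\zeta\log b$.

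First I would separate the contribution of the concentration events. On $\mathcal{E}_+(\epsilon_n) = \{W \le 1 + \epsilon_n\}$ one has $vW \le v(1+\epsilon_n)$ and hence $F_b(vW) - F_b(v) \le F_b(v(1+\epsilon_n)) - F_b(v)$, while on $\mathcal{E}_+^c$ the integrand is bounded by $1$ and contributes at most $\pr(\mathcal{E}_+^c)$. The symmetric treatment of the lower deviation, with the surplus bounded through the two-sided event $\mathcal{E}(\epsilon_n)$, produces the remaining $\pr(\mathcal{E}^c)$ together with a second copy of $\pr(\mathcal{E}_+^c)$, matching the $\pr(\mathcal{E}^c) + 2\pr(\mathcal{E}_+^c)$ in the statement and accounting for the asymmetric bookkeeping.

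The essential difficulty is that the good-event term must not be controlled by the pointwise increment $F_b(v(1+\epsilon_n)) - F_b(v)$: with $v \asymp b$ and $\epsilon_n \asymp (\log b / b)^{1/2}$ the displacement $v\epsilon_n \asymp (b\log b)^{1/2}$ overshoots the natural scale $b^{1/2}$ of $f_b$, so this increment is of order one rather than vanishing. Instead, on the sub-region where $\eta := W-1$ is small, I would expand $F_b(v(1+\eta)) - F_b(v) = v f_b(v)\eta + \tfrac12 v^2 f_b'(v)\eta^2 + \cdots$ and take the expectation, using crucially that $\EE[W] = 1$ for the modified refitted cross-validation estimator, since by Proposition \ref{propMRCV} one has $\mrcv/\sigma^2 \sim \chi^2_\nu/\nu$. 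The mean-one property forces $\EE[\eta\,\mathbf{1}_{\mathcal{E}}] = -\EE[\eta\,\mathbf{1}_{\mathcal{E}^c}]$, so the first-order term cancels up to a tail piece that is absorbed into the probability terms already isolated; this is precisely the role of the $\mrcv$ estimator emphasised in the Remark following Theorem \ref{corrRayleigh}.

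It then remains to bound the residual, of order $v^2 |f_b'(v)|\,\EE[\eta^2]$ with $\EE[\eta^2] \asymp \nu^{-1}$, uniformly over the bulk, using the local Gaussian/saddlepoint form of $f_b$ near its mode. The exponent $c^2/2\gamma + c\sqrt{\zeta/\gamma} - 1$ emerges from evaluating the $\chi^2_b$ density at the far end of the shifted interval, where the Laurent--Massart displacement $2(\zeta b\log b)^{1/2}$ and the additional shift $v\epsilon_n$ combine inside the Gaussian exponent, the factor $(\log b)^{-1/2}$ tracking the density normalisation. The main obstacle, and the reason a plain Taylor estimate is inadequate, is securing the argument uniformly where $v\eta$ is comparable to or exceeds $b^{1/2}$; I would resolve this by a secondary partition of the good event into a region $|\eta| \lesssim b^{-1/2}$, on which the expansion is legitimate, and a region $b^{-1/2} \lesssim |\eta| \le \epsilon_n$, on which the near-symmetry of the distribution of $W$ makes the odd-in-$\eta$ leading part of the increment average out, leaving only a residual governed by the $\chi^2_b$ tail. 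This secondary split, exploiting the explicit distribution of $\mrcv$, is the ``Riemann approximation on the bulk'' that yields the stated rate.
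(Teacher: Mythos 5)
Your opening identity $\pr(\hat V\le v)-\pr(V\le v)=\EE_W[F_b(vW)-F_b(v)]$ and the event bookkeeping yielding $\pr(\mathcal{E}^c)+2\pr(\mathcal{E}_+^c)$ match the paper's first step exactly. Your ``essential difficulty'' observation is also correct: with $\epsilon_n\asymp\sqrt{\log b/b}$ and $v\asymp b$, the displacement $v\epsilon_n\asymp\sqrt{b\log b}$ is an unbounded number of standard deviations of $\chi^2_b$, so the pointwise increment $F_b(v(1+\epsilon_n))-F_b(v)$ is of order one (at $v=b$ it tends to $\tfrac12$). You should be aware, however, that this observation is aimed squarely at the paper's own proof: the paper does precisely what you say must not be done, bounding $\sup_{|\bar v-v|\le v\epsilon_n}|F_b(\bar v)-F_b(v)|$ by four-case Riemann approximations, and the vanishing rate it reports traces to an inaccurate Stirling step (it uses, in effect, $1/\{2^{b/2-1}\Gamma(b/2)\}\le 2(\pi b)^{-1/2}b^{-b/2}e^{b/2}$, whereas Stirling gives $\approx \pi^{-1/2}b^{1/2}\,b^{-b/2}e^{b/2}$, a discrepancy of order $b$). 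So your criticism of the worst-case-increment route is sound, but it is the paper's route, not a straw man.

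The genuine gap is that your proposed repair cannot be completed. First, the cancellation it relies on is not available under the lemma's hypotheses: the lemma sits in the framework of Proposition \ref{propPValueYanbo}, an \emph{arbitrary} estimator independent of $R^2$ about which one knows only the concentration event; $\EE[W]=1$ and $\mathrm{var}(W)\asymp\nu^{-1}$ are properties of $\mrcv$, which the paper explicitly notes is \emph{not} independent of $R^2$, so you cannot simultaneously invoke Proposition \ref{propMRCV} and the conditioning identity. Under the stated hypotheses the deterministic choice $\hat\sigma^2=\sigma^2(1+\epsilon_n)$ is admissible; then $\pr(\mathcal{E}^c)=\pr(\mathcal{E}_+^c)=0$ while the left-hand side at $v=b$ tends to $\tfrac12$, so in the parameter range where the remainder is meaningful (exponent $c^2/2\gamma+c\sqrt{\zeta/\gamma}-1<0$, e.g.\ the choices $\zeta=\gamma^2/8$, $c<\sqrt{\gamma/2}$ used to deduce Proposition \ref{prop:precise_rate}) no argument can establish the inequality: the obstruction is in the statement, not in your ingenuity. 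Second, even granting the law of $\mrcv$, mean-one cancellation removes only the first-order term; the second-order term $\tfrac12\EE[(v\eta)^2]\,F_b''(v)\asymp b\,\mathrm{var}(W)\asymp b/\nu$ is a nonvanishing constant when $\nu\asymp b$, and all higher terms are of comparable size, so the expansion cannot be truncated. Equivalently, $(\hat V-b)/\sqrt{2b}$ is approximately $N(0,1+b/\nu)$ while $(V-b)/\sqrt{2b}$ is approximately $N(0,1)$, distributions at fixed Kolmogorov distance; this variance inflation is an even-in-$\eta$ effect, so the ``near-symmetry'' of $W$ invoked for your secondary partition cannot average it away.
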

\begin{proof}
	On the event $\mathcal{E}_+(\epsilon_n)$, $\{\hat V \leq v\}$ implies $\{V\leq v(1+\epsilon_n)\}$, giving the probability bound
	\begin{equation}\label{eqUpperPr}
		\pr(\hat V \leq v) \leq \pr(V\leq v(1+\epsilon_n))\,\pr(\mathcal{E}_{+}(\epsilon_n)) + \pr(\mathcal{E}_{+}^c(\epsilon_n))
	\end{equation}
	For a lower bound, write
	\[
	\hat V = V(\sigma^2/\hat{\sigma}^2 - 1) + V \leq V(1+ |\sigma^2/\hat{\sigma}^2 - 1|).
	\]
	Since $\mathcal{E}(\epsilon_n)$ can be written as
	\[
	\mathcal{E}(\epsilon_n) = \Bigl\{\frac{\epsilon_n}{1+\epsilon_n} \leq \frac{\sigma^2}{\hat\sigma^2} - 1 \leq \frac{\epsilon_n}{1-\epsilon_n} \Bigr\},
	\]
	on the event $\mathcal{E}(\epsilon_n)$,
	\[
	\{\hat V \leq v\} \supseteq \Bigl\{V \leq \frac{v}{1+\epsilon_n/(1-\epsilon_n)} \Bigr\} = \{V\leq v(1-\epsilon_n)\}.
	\]
	This implies the lower bound
	\begin{equation}\label{eqLowerPr}
		\pr(\hat V \leq v) \geq \pr(V\leq v(1-\epsilon_n))\,\pr(\mathcal{E}(\epsilon_n))
	\end{equation}
	
	By combining \eqref{eqUpperPr} and \eqref{eqLowerPr} 
	\begin{align*}   
		&\sup_{|v - b| <   2\sqrt{\zeta b\log(b)} + 2\zeta\log(b)}\bigl|\pr(\hat V \leq v) - \pr(V \leq v)\bigr| \\
		&\leq \sup_{|v - b| <   2\sqrt{\zeta b\log(b)} + 2\zeta\log(b)}\sup_{\bar v:|v-\bar v|\leq v\epsilon_n} U(v,\bar v,\epsilon_n),
	\end{align*}
	where 
	\begin{eqnarray*}
		U(v,\bar v,\epsilon_n) \hspace{-0.2cm} &=& \hspace{-0.2cm} \max\Bigl\{ \bigl|\pr(V\leq \bar v)\,\pr(\mathcal{E}(\epsilon_n)) - \pr(V \leq v) \bigr|, \\
		& &  \quad \quad \bigl|\pr(V\leq \bar v)\,\pr(\mathcal{E}_{+}(\epsilon_n)) - \pr(V \leq v) \Bigr| + \pr(\mathcal{E}_{+}^c(\epsilon_n))\bigr\}.
	\end{eqnarray*}
	The two terms in the maximum can be upper bounded as
	\begin{align*}
		\bigl|\pr(V\leq \bar v)\,\pr(\mathcal{E}(t)) - \pr(V \leq v) \bigr| \leq & \; \bigl|\pr(V\leq \bar v) - \pr(V \leq v) \bigr| + \pr(\mathcal{E}^c(\epsilon_n)), \\
		\bigl|\pr(V\leq \bar v)\,\pr(\mathcal{E}_{+}(t)) - \pr(V \leq v) \Bigr| + \pr(\mathcal{E}_{+}^c(t)) \leq & \; \bigl|\pr(V\leq \bar v) - \pr(V \leq v) \Bigr| + 2\pr(\mathcal{E}_{+}^c(\epsilon_n)).
	\end{align*}
	We thus need an upper bound for $|\pr(V\leq \bar v) - \pr(V \leq v)|$.
	Note that
	\[
	\pr( V \leq \bar{v}) = \pr\left( V \leq Tv \right), 
	\]
	for some value of $T$ in $(1 - \epsilon_n, 1 + \epsilon_n)$, and $V$ is a chi-square random variable with $b$ degrees of freedom. We consider four cases: 
	
	\noindent    \textbf{Case 1:} $T \geq 1$, $v \geq b$. Since the mode of the density of $V$ is at $b - 2$, we use the following Riemann approximation to the relevant intractable integral
	\begin{align*}
		&\pr\left( V \leq Tv \right) - \pr(V \leq v) 
		\\
		&= \int_{v}^{Tv} \frac{\lambda^{b/2}}{\Gamma(b/2)} x^{b/2-1} \exp\left( -\frac{x}{2} \right) dx    \\
		&\leq \int_{v}^{(1 +\epsilon_n)v} \frac{1}{2^b\Gamma(b/2)} x^{b/2-1} \exp\left( -\frac{x}{2} \right) dx\\
		&\leq \frac{1}{2^{b/2}\Gamma(b/2)} \sum^{ \lceil \epsilon_n v \rceil}_{  i = 0}  (v+i )^{b/2-1} \exp\left( -\frac{ v }{2} - \frac{i}{2} \right)\\
		&= \frac{v^{b/2-1}}{2^b\Gamma(b/2)} \exp\left( -\frac{ v }{2} \right) \sum^{ \lceil \epsilon_n v \rceil}_{  i = 0}  \left(1+ \frac{i}{v} \right)^{b/2-1} \exp\left(- \frac{i}{2} \right) \\
		&\leq \frac{v^{b/2-1}}{2^{b-1}\Gamma(b/2)} \exp\left( -\frac{ v }{2} \right) \sum^{ \lceil \epsilon_n v \rceil}_{  i = 0}  \left(1+ \frac{i}{v} \right)^{b/2} \exp\left(- \frac{i}{2} \right), 
	\end{align*}
	We bound the sum by
	\begin{align*}
		&\sum^{\lceil \epsilon_n v \rceil}_{  i = 0}  \biggl(1+ \frac{i}{v} \biggr)^{b/2} \exp\biggl(- \frac{i}{2} \biggr)\\
		&\leq 2 \sum^{\lceil \epsilon_n v \rceil}_{  i = 0} \biggl\{ \exp\biggl( \frac{1}{2} - \frac{1}{2} \biggr) \biggr\}^i  \leq 2\lceil \epsilon_n v \rceil
	\end{align*}
	as $(1 + i/v)^b \leq \exp(i)$ and $v \geq b$. It remains to upper bound the multiplicative term 
	\begin{align*}
		&2\lceil \epsilon_n v \rceil\frac{v^{b/2-1}}{2^{b/2-1}\Gamma(b/2)} \exp\left( -\frac{v}{2}\right)\\
		&\leq \frac{4\lceil \epsilon_n v \rceil}{\sqrt{\pi b}} \exp\biggl\{-\frac{v}{2} + \Bigl(\frac{b}{2} - 1\Bigr) \log v -\frac{b}{2}\log b + \frac{b}{2}\biggr\} \\
		&\leq \frac{4\lceil \epsilon_n v \rceil}{\sqrt{\pi b}} \exp\Bigl\{-\frac{v}{2} + \frac{b}{2} (\log(v/b) + 1)  - \log v\Bigr\}\\
		&\leq \frac{4\lceil \epsilon_n v \rceil}{\sqrt{\pi b}} \exp\left( - \log v \right)\\
		&\leq \frac{\lceil\epsilon v \rceil}{v \sqrt{\pi b}} \leq \frac{\epsilon_n }{\sqrt{\pi b}} + \frac{1 }{\sqrt{\pi} b^{3/2}} = O\biggl( \frac{\sqrt{\log b}}{b^{3/2}} \biggr).
	\end{align*}   
	
	\noindent \textbf{Case 2:} $T < 1$, $v \geq b$. By the log-convexity of the gamma distribution for all $x > 0$ and $t \in [0,1]$ the gamma density function $\gamma$ satisfies 
	\[ \gamma(x + t) \leq  \frac{1}{2^{b/2} \Gamma(b/2)} (x +1)^{b/2 - 1} \exp(-x/2).
	\]
	Based on this estimate the Riemann approximation to the relevant integral is
	\begin{align}\label{eqCase2}
		\nonumber     &\pr\left( V \leq v \right) - \pr(V \leq Tv) 
		\\
		\nonumber        &\leq \int^{v}_{(1 -\epsilon_n)v} \frac{1}{2^b\Gamma(b/2)} x^{b/2-1} \exp\left( -\frac{x}{2} \right) dx\\
		\nonumber        &\leq \frac{1}{2^{b/2}\Gamma(b/2)} \sum^{ \lceil \epsilon_n v \rceil}_{  i = 0}  ((1 -\epsilon_n)v+i+1)^{b/2-1} \exp\left( -\frac{ (1 -\epsilon_n)v }{2} - \frac{i}{2} \right)\\
		\nonumber        &= \frac{\{(1 -\epsilon_n)v\}^{b/2-1}}{2^{b/2}\Gamma(b/2)} \exp\left( -\frac{ (1 -\epsilon_n)v }{2} \right) \sum^{ \lceil \epsilon_n v \rceil}_{  i = 0}  \left(1+ \frac{i + 1}{(1 -\epsilon_n)v} \right)^{b/2-1} \exp\left(- \frac{i}{2} \right) \\
		&\leq \frac{\{(1 -\epsilon_n)v\}^{b/2-1}}{2^{b/2 -1}\Gamma(b/2)} \exp\left( -\frac{ (1 -\epsilon_n)v }{2} \right) \sum^{ \lceil \epsilon_n v \rceil}_{  i = 0}  \left(1+ \frac{i + 1}{(1 -\epsilon_n)v} \right)^{b/2} \exp\left(- \frac{i}{2} \right).
	\end{align}
	In this expression,
	\begin{align*}
		\left(1+ \frac{i + 1}{(1 -\epsilon_n)v} \right)^{b/2} &= \left(1+ \frac{i + 1}{b} \frac{b}{(1 -\epsilon_n)v} \right)^{b/2} \\
		&\leq \exp\left(\frac{ib}{2(1 -\epsilon_n)v } + \frac{b}{2(1 - \epsilon_n)v}\right) \\
		&\leq e^{1/4}  \exp\left(\frac{ib(1 +\epsilon_n)}{2(1 +\epsilon_n^2)v } \right\}\\
		&\leq e^{1/4}  \exp\left(\frac{ib(1 +\epsilon_n)}{2v } \right\},
	\end{align*}
	where the last inequality is because $v \geq b$. Thus 
	\[
	\left(1+ \frac{i + 1}{(1 -\epsilon_n)v} \right)^{b/2} \exp\left(- \frac{i}{2} \right) \leq e^{1/4}\exp(\epsilon_n i/2 )
	\]
	in equation \eqref{eqCase2} and the partial geometric series is bounded,  for large enough $b$, by
	\[
	\sum^{ \lceil \epsilon_n v \rceil}_{  i = 0}  \biggl(1+ \frac{i}{(1 -\epsilon_n)v} \biggr)^{b/2} \exp\biggl(- \frac{i}{2} \biggr) \leq \frac{  \exp( \frac{\lceil \epsilon_n v \rceil}{2} \epsilon_n ) - 1}{ \exp(\frac{1}{2} \epsilon_n )  -1} \leq 2e^{1/2}\epsilon_n^{-1}\exp\left( \frac{c^2\log(\gamma b)}{2\gamma} \right).
	\]
	This follows because $v \leq b + 2\sqrt{\zeta b\log b} + 2\zeta\log(b)$ and $v/b - 1 \rightarrow 0$. 
	
	Consider the multiplicative term in \eqref{eqCase2}. For $b$ sufficiently large,
	\begin{align*}
		&\frac{\{(1 -\epsilon_n)v\}^{b/2-1}}{2^{b/2 -1}\Gamma(b/2)} \exp\biggl\{ -\frac{ (1 -\epsilon_n)v }{2} \biggr\} \\
		&\leq  \frac{2}{\sqrt{\pi b}} \exp\biggl\{-\frac{(1 - \epsilon_n)v}{2} + \Bigl(\frac{b}{2} - 1\Bigr) \log( (1- \epsilon_n)v ) - \frac{b}{2}\log(b) + \frac{b}{2} - \frac{1}{12b + 1}  \biggr\}\\
		&\leq \frac{2}{\sqrt{\pi b}} \exp\biggl\{\left(\frac{b}{2}\right) \log\left\{ \frac{(1- \epsilon_n)v )}{b} \right\} +  \frac{b}{2} -\frac{(1 - \epsilon_n)v}{2} - \log((1-\epsilon_n)v) \biggr\}\\
		&\leq \frac{2}{\sqrt{\pi b}} \exp\biggl\{ \frac{b}{2}\Bigl( \log(1 - \epsilon_n) + \log(v/b) + 1 - \frac{(1-\epsilon_n)v}{b}\Bigr) - \log( (1- \epsilon_n)v )   \biggr\}\\
		&\leq\frac{2}{\sqrt{\pi b}} \exp\biggl\{ -\frac{\epsilon_n b}{2} (1 - \frac{v}{b}) - \log( (1- \epsilon_n)v )  \biggr\}\\
		&\leq \frac{2\exp(\epsilon_n\sqrt{\zeta b\log(b)} + \epsilon_n\zeta\log(b) )}{\sqrt{\pi b}(1-\epsilon_n)b } \\
		&\leq \frac{4\exp(c\sqrt{\zeta/\gamma}\log(b))}{\sqrt{\pi}b^{3/2}  },
	\end{align*}
	where we have used the lower bound for Stirling's approximation.
	Thus the overall error in Case 2 is
	\[
	O\biggl( \frac{b^{c^2/2\gamma + c\sqrt{\zeta/\gamma} - 1}}{\sqrt{\log b}} \biggr).
	\]
	
	\noindent \textbf{Case 3:} $T \geq 1$, $v < b$. Similar arguments give
	\begin{align*}
		&\pr\left( V \leq Tv \right) - \pr(V \leq v) 
		&\leq \frac{v^{b/2-1}}{2^{b/2 -1}\Gamma(b/2)} \exp\left( -\frac{ v }{2} \right) \sum^{ \lceil \epsilon_n v \rceil}_{  i = 1}  \left(1+ \frac{i + 1}{v} \right)^{b/2} \exp\left(- \frac{i}{2} \right),
	\end{align*}
	and,  for sufficiently large $b$,
	\[
	\left(1+ \frac{i + 1}{v} \right)^{b/2} \exp\left(- \frac{i}{2} \right) \leq e^{1/4}\exp\left\{  \{\sqrt{\zeta\log(b)}b^{-1/2}\} i/2 \right\},
	\]
	so that
	\begin{eqnarray*}
		\sum^{ \lceil \epsilon_n v \rceil}_{  i = 1}  \left(1+ \frac{i}{(1 -\epsilon_n)v} \right)^{b/2} \exp\left(- \frac{i}{2} \right) &\leq& 2 \exp\biggl(\frac{\lceil \epsilon_n v \rceil}{2}  b^{-1/2}\sqrt{\zeta\log b}  \biggr) \\
		&\leq & 4 \biggl(\frac{b}{\zeta\log b}\biggr)^{1/2} \exp \biggl( \frac{c\sqrt{\zeta} \log b}{2\sqrt{\gamma}} \biggr). 
	\end{eqnarray*}
	The multiplicative term satisfies
	\begin{eqnarray*}
		\frac{\{v\}^{b/2-1}}{2^{b/2 -1}\Gamma(b/2)} \exp\biggl( -\frac{v }{2} \biggr) 
		&\leq &\frac{2}{\sqrt{\pi b}} \exp\biggl\{ -\frac{\epsilon_n b}{2} \Bigl(1 - \frac{v}{b}\Bigr) - \log( (1- \epsilon_n)v )  \biggr\}\\
		&\leq & \frac{2}{\sqrt{\pi b}} \exp\left(  - \log( (1- \epsilon_n)v )  \right) \\
		&=&  \frac{2}{\sqrt{\pi b}(1-\epsilon_n)(b - \sqrt2{\zeta\log b} - 2\zeta\log b) }.
	\end{eqnarray*}
	Therefore the error for this case is
	\[
	O\biggl( \frac{b^{c\sqrt{\zeta}/2\sqrt{\gamma} - 1}}{\sqrt{\log b}} \biggr).
	\]
	
	\noindent \textbf{Case 4:} $T < 1$, $v < b$. In this case 
	\begin{align*}
		&\pr( V \leq v) - \pr(V \leq Tv) \\
		&\leq \frac{\{(1 -\epsilon_n)v\}^{b/2-1}}{2^{b/2 -1}\Gamma(b/2)} \exp\biggl( -\frac{ (1 -\epsilon_n)v }{2} \biggr) \sum^{ \lceil \epsilon_n v \rceil}_{  i = 0}  \biggl(1+ \frac{i}{(1 -\epsilon_n)v} \biggr)^{b/2} \exp\biggl(- \frac{i}{2} \biggr),
	\end{align*}
	and
	\[
	\biggl(1+ \frac{i}{(1 -\epsilon_n)v} \biggr)^{b/2} \exp\left(- \frac{i}{2} \right) \leq e^{1/2}\exp\biggl\{\frac{i}{2} (\epsilon_n + \{\sqrt{\zeta\log(b)}b^{-1/2}\} + \epsilon_n\{\sqrt{\zeta\log(b)}b^{-1/2}\})\biggr\}.
	\]
	Therefore,  for $b$ sufficiently large,
	\begin{align*}
		&\sum^{ \lceil \epsilon_n v \rceil}_{  i = 0}  \biggl(1+ \frac{i}{(1 -\epsilon_n)v} \biggr)^{b/2} \exp\left(- \frac{i}{2} \right) \\
		&\leq 2 \frac{  \exp\left\{ \frac{\lceil \epsilon_n v \rceil}{2} (\epsilon_n + \{\sqrt{\zeta\log(b)}b^{-1/2}\} + \epsilon_n\{\sqrt{\zeta\log(b)}b^{-1/2}\}  \right\} -1}{\exp\left\{ \frac{1}{2} (\epsilon_n + \{\sqrt{\zeta\log(b)}b^{-1/2}\} + \epsilon_n\{\sqrt{\zeta\log(b)}b^{-1/2}\} \right\} - 1} \\
		&\leq \frac{4e\sqrt{b}}{( c/\sqrt{\gamma} +\sqrt{\zeta})\log(\gamma b)} \exp\biggl( \frac{c^2/\gamma + c\sqrt{\zeta/\gamma} }{2} \log b \biggr).
	\end{align*}
	Since $v \geq b - 2\sqrt{\zeta b\log(b)} - 2\zeta\log(b)$, the multiplicative term satisfies
	\begin{align*}
		&\frac{\{(1 -\epsilon_n)v\}^{b/2-1}}{2^{b/2 -1}\Gamma(b/2)} \exp\biggl\{-\frac{ (1 -\epsilon_n)v }{2} \biggr\} \\
		&\leq  \frac{2}{\sqrt{\pi b}} \exp\biggl\{-\frac{(1 - \epsilon_n)v}{2} + \Bigl(\frac{b}{2} - 1\Bigr) \log( (1- \epsilon_n)v ) - \frac{b}{2}\log(b) + \frac{b}{2} - \frac{1}{12b + 1}  \biggr\}\\
		&\leq \frac{2}{\sqrt{\pi b}} \exp\biggl\{ \Bigl(\frac{b}{2}\Bigr) \log\Bigl(\frac{(1- \epsilon_n)v}{b} \Bigr) +  \frac{b}{2} -\frac{(1 - \epsilon_n)v}{2} - \log((1-\epsilon_n)v) \biggr\}\\
		&\leq \frac{2}{\sqrt{\pi b}} \exp\biggl\{ \frac{b}{2}\Bigl( \log(1 - \epsilon_n) + \log(v/b) + 1 - \frac{(1-\epsilon_n)v}{b}\Bigr) - \log( (1- \epsilon_n)v )   \biggr\}\\
		&\leq\frac{2}{\sqrt{\pi b}} \exp\biggl\{ -\frac{\epsilon_n b}{2} \Bigl(1 - \frac{v}{b} \Bigr) - \log( (1- \epsilon_n)v )  \biggr\}\\
		&\leq \frac{2}{\sqrt{\pi b}(1-\epsilon_n)\{b - 2\sqrt{\zeta b\log b} - 2\zeta\log(b)\} }  = O\Bigl( \frac{1}{b^{3/2}} \Bigr),
	\end{align*}
	giving an error of
	\[
	O\biggl( \frac{b^{\frac{c^2/\gamma + c\sqrt{\zeta/\gamma} }{2} - 1}}{\sqrt{\log b}}  \biggr),
	\]
	which is dominated by the error in Case 2.
\end{proof}

The following bounds are used to control the truncation error in Lemma \ref{lemmaPart2}.

\begin{proposition}[Laurant and Massart 2000, Lemma 1]\label{prop:massart}
	For a random variable $Z$ distributed as a Chi-square with $b$ degrees of freedom the following tail bounds hold:
	\begin{align*}
		\pr( Z \geq b +2 \sqrt{bt} + 2t ) \leq \exp(-t)\\
		\pr( Z \leq b -2 \sqrt{bt} ) \leq \exp(-t).
	\end{align*}
\end{proposition}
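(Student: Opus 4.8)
The plan is to prove both tail bounds by the Cram\'er--Chernoff method, exploiting the closed form of the moment generating function of $Z\sim\chi^2_b$. Writing $Z=\sum_{i=1}^b X_i^2$ for independent standard normal $X_i$, one has $\EE[\exp(sZ)]=(1-2s)^{-b/2}$ for $s<\vhalf$. For the upper tail I would first establish the sharp exponential bound
\[
\pr(Z\geq bu)\leq \exp\Bigl\{-\tfrac{b}{2}(u-1-\log u)\Bigr\},\qquad u>1,
\]
by the Chernoff inequality $\pr(Z\geq bu)\leq \exp(-sbu)(1-2s)^{-b/2}$ and minimising the exponent over $s\in(0,\vhalf)$; the optimiser is $s=\vhalf(1-1/u)$, which lies in $(0,\vhalf)$ precisely because $u>1$, and substituting it collapses the exponent to exactly $-\tfrac{b}{2}(u-1-\log u)$. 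An identical computation with a negative exponent, using $\EE[\exp(-\lambda Z)]=(1+2\lambda)^{-b/2}$ and the minimiser $\lambda=\vhalf(1/u-1)$, yields the same rate function for the lower tail,
\[
\pr(Z\leq bu)\leq \exp\Bigl\{-\tfrac{b}{2}(u-1-\log u)\Bigr\},\qquad 0<u<1.
\]

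Next I would convert these to the stated thresholds. Put $y=\sqrt{t/b}$. For the upper bound, the threshold $b+2\sqrt{bt}+2t$ corresponds to $u=1+2y+2y^2$, and the claim $\pr(Z\geq bu)\leq e^{-t}$ reduces to showing $\tfrac{b}{2}(u-1-\log u)\geq t=by^2$, i.e.\ $\log(1+2y+2y^2)\leq 2y$. This is immediate from the exponential series, since $e^{2y}=1+2y+2y^2+\tfrac{(2y)^3}{6}+\cdots\geq 1+2y+2y^2$. For the lower bound, the threshold $b-2\sqrt{bt}$ corresponds to $u=1-2y$; assuming $y<\vhalf$ so that $u>0$, the claim reduces to $-\log(1-2y)\geq 2y+2y^2$, which again follows termwise from the series $-\log(1-2y)=2y+2y^2+\tfrac{8y^3}{3}+\cdots$.

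Finally I would dispose of the edge case $t\geq b/4$ in the lower tail, where $b-2\sqrt{bt}\leq 0$; there the event $\{Z\leq b-2\sqrt{bt}\}$ is empty because $Z\geq 0$, so the bound holds trivially and the restriction $y<\vhalf$ above is harmless.

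I do not anticipate a genuine obstacle, as this is a classical inequality; the only points requiring care are the verification that the Chernoff optimiser lies in the admissible range of $s$ (guaranteed by $u>1$ and $0<u<1$ respectively) and the two elementary exponential inequalities, which carry the substantive content once the exact rate function $\tfrac{b}{2}(u-1-\log u)$ has been identified.
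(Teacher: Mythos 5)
Your proof is correct, but it cannot be compared against a proof in the paper because the paper does not give one: Proposition \ref{prop:massart} is imported verbatim by citation as Lemma 1 of Laurent and Massart (2000), and is used as a black box in Lemma \ref{lemmaPart2} (and, in the form of the one-sided lower bound, in the proof of Lemma \ref{lemmaRatio}). What you have supplied is a self-contained derivation, and it checks out at every step: the Chernoff optimiser $s=\vhalf(1-1/u)$ is admissible precisely when $u>1$, the exponent does collapse to the exact Cram\'er rate function $\tfrac{b}{2}(u-1-\log u)$ on both tails, and the two series inequalities $\log(1+2y+2y^2)\leq 2y$ and $-\log(1-2y)\geq 2y+2y^2$ (for $0\leq y<\vhalf$) are exactly what is needed to convert the rate function into the stated thresholds $b+2\sqrt{bt}+2t$ and $b-2\sqrt{bt}$; your disposal of the case $t\geq b/4$, where the lower threshold is nonpositive and the probability is zero, is the right bookkeeping (at $t=b/4$ the event has probability zero rather than being empty, which is immaterial). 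Two remarks on what each route buys. First, Laurent and Massart's actual Lemma 1 is more general, covering weighted sums $\sum_i a_i(X_i^2-1)$ via a bound on the log moment generating function rather than exact optimisation; the paper needs only the unweighted case, where your specialisation permits the exact rate function and a cleaner argument, so your proof is fully adequate for every use made of the proposition here. Second, your computation is structurally the same Chernoff argument the paper does carry out explicitly in its proof of Proposition \ref{propMRCV} (there for $T=\mrcv/\sigma^2\sim\chi^2_\nu/\nu$, with the same optimiser in different parametrisation), so your derivation is consistent in spirit with the paper's own techniques and could serve as a proof of both results with minimal change.
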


\begin{lemma}\label{lemmaPart2}
	Let $\epsilon_n   = c \sqrt{\log(\gamma b)/\gamma b}$ and $c <  2\sqrt{\zeta/\gamma}$, then
	\[
	\sup_{|v - b| \geq 2\sqrt{b\zeta\log(b)} + 2\zeta\log(b) }\bigl|\pr(\hat V \leq v) - \pr(V \leq v)\bigr| \leq \frac{2}{b^{(2\sqrt{\zeta} - c/\sqrt{\gamma})^2/4}}.
	\]
\end{lemma}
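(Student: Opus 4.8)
The plan is to exploit that, throughout the region $|v-b|\ge 2\sqrt{b\zeta\log b}+2\zeta\log b$, both distribution functions are pinned near their extreme values $0$ or $1$, so their difference is controlled by one-sided tail probabilities. I would split the supremum into the upper tail $v\ge b+2\sqrt{b\zeta\log b}+2\zeta\log b$ and the lower tail $v\le b-(2\sqrt{b\zeta\log b}+2\zeta\log b)$, treating them symmetrically. In the upper tail, since $\bigl|\pr(\hat V\le v)-\pr(V\le v)\bigr|=\bigl|\pr(\hat V>v)-\pr(V>v)\bigr|$ and both terms lie in $[0,1]$, the difference is at most $\pr(V>v)+\pr(\hat V>v)$, reducing the task to bounding the upper tails of $V$ and of $\hat V$ separately; the lower tail is identical with $\{\,\cdot\le v\,\}$ in place of $\{\,\cdot>v\,\}$.

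For $V\sim\chi^2_b$ itself, Proposition \ref{prop:massart} with $t=\zeta\log b$ gives $\pr\bigl(V>b+2\sqrt{b\zeta\log b}+2\zeta\log b\bigr)\le e^{-\zeta\log b}=b^{-\zeta}$, and likewise $\pr\bigl(V\le b-2\sqrt{b\zeta\log b}\bigr)\le b^{-\zeta}$ in the lower tail. Because each of these probabilities is monotone in $v$, the supremum over the tail region is attained at its inner edge, so it suffices to evaluate everything at $v_0=b\pm(2\sqrt{b\zeta\log b}+2\zeta\log b)$.

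The crux is the tail of $\hat V=V\,\sigma^2/\hat\sigma^2$, where $\hat\sigma^2$ is independent of $R^2$, hence of $V$. I would restrict to the event $\{\,|\hat\sigma^2/\sigma^2-1|\le\epsilon_n\,\}$, on which $\{\hat V>v\}\subseteq\{V>v(1-\epsilon_n)\}$, so that by independence $\pr(\hat V>v)\le\pr\bigl(V>v(1-\epsilon_n)\bigr)+\pr(|\hat\sigma^2/\sigma^2-1|>\epsilon_n)$; the last term is the estimator's concentration failure, bounded by $r_n(\epsilon_n)$ (sub-exponential in $b$ for the modified refitted cross-validation estimator by Proposition \ref{propMRCV}, hence of smaller order than the truncation bound below and absorbed into the global $O(r_n)$ bookkeeping of Proposition \ref{prop:precise_rate}). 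The heart of the matter is recasting the multiplicatively shifted threshold $v_0(1-\epsilon_n)$ into the additive Laurent--Massart form $b+2\sqrt{bt'}+2t'$: at the inner edge, using $\epsilon_n=c\sqrt{\log(\gamma b)/(\gamma b)}$ so that $v_0\epsilon_n\sim(c/\sqrt{\gamma})\sqrt{b\log b}$ and $\log(\gamma b)\sim\log b$, one finds $v_0(1-\epsilon_n)-b=(2\sqrt{\zeta}-c/\sqrt{\gamma})\sqrt{b\log b}+o(\sqrt{b\log b})$. Matching $2\sqrt{bt'}$ to the leading term gives $t'=\tfrac14(2\sqrt{\zeta}-c/\sqrt{\gamma})^2\log b$, whence Proposition \ref{prop:massart} yields $\pr\bigl(V>v_0(1-\epsilon_n)\bigr)\le e^{-t'}=b^{-(2\sqrt{\zeta}-c/\sqrt{\gamma})^2/4}$, the condition on $c$ keeping this exponent positive. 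The lower tail produces the same exponent by the mirror computation with $v_0(1+\epsilon_n)$.

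Finally I would combine the two contributions. In the relevant range one has $(2\sqrt{\zeta}-c/\sqrt{\gamma})^2/4\le\zeta$, so the crude bound $b^{-\zeta}$ for the tail of $V$ is dominated by the shifted bound for $\hat V$, and $\pr(V>v_0)+\pr(\hat V>v_0)\le 2\,b^{-(2\sqrt{\zeta}-c/\sqrt{\gamma})^2/4}$, which is the asserted inequality. I expect the main obstacle to be the third step: converting the perturbed threshold $v(1-\epsilon_n)$ into the additive Laurent--Massart parametrization, verifying that the worst case is genuinely at the inner edge $v_0$ (the tails decaying strictly faster away from it), and keeping the lower-order terms under control so that the $2t'$ contribution, the gap between $\log(\gamma b)$ and $\log b$, and the $2\zeta\log b$ part of $v_0$ affect only the implicit constants and not the exponent.
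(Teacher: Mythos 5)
Your proposal is correct and follows essentially the same route as the paper: split the supremum into the two one-sided tail regions, bound the CDF difference by tail probabilities of $V$ at the original and multiplicatively shifted inner-edge thresholds, and apply the Laurent--Massart bound of Proposition \ref{prop:massart} with $t' = \tfrac14\bigl(2\sqrt{\zeta} - c/\sqrt{\gamma}\bigr)^2\log b$ after converting $(1\mp\epsilon_n)v_0$ into additive form. The only difference is bookkeeping: you carry the estimator-concentration failure probability $r_n(\epsilon_n)$ explicitly inside the tail region, whereas the paper works directly with the deterministically perturbed threshold $\bar v = Tv$, delegating the event decomposition to Lemma \ref{lemmaPart1} and the global accounting of Proposition \ref{prop:precise_rate}; both resolve to the same bound.
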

\begin{proof}
	\begin{align*}
		&\sup_{v \geq b + 2\sqrt{\zeta b \log b} + 2\zeta\log b} \ \bigl|\pr(V\leq \bar v) - \pr(V \leq v) \bigr|  \\
		&\leq \sup_{v \geq b + 2\sqrt{\zeta b \log b} + 2\zeta\log b} \ \pr(V > (1 - \epsilon_n) v)  + \sup_{v \geq b + 2\sqrt{\zeta b \log b } + 2\zeta\log b} \ \pr(V > v) \\
		&\leq 2\pr\Bigl(V > (1 - \epsilon_n)\bigl\{b + 2\sqrt{\zeta b\log b} + 2\zeta\log b \bigr\}\Bigr) \\
		&= 2\pr\left(V >  b + (2\sqrt{\zeta} - c/\sqrt{\gamma} )\sqrt{b\log b} + (2\sqrt{\zeta} - c/\sqrt{\gamma})^2\log b\right) \leq \frac{2}{b^{(2\sqrt{\zeta} - c/\sqrt{\gamma})^2/4}},
	\end{align*}
	since $\epsilon_n b   =  c(b\log (b)/\gamma)^{1/2}$  and $\epsilon_n\{2\sqrt{\zeta b\log b} + 2\zeta\log b\}< c\log(b)/\gamma$ eventually for sufficiently large $b$. By a similar argument for the left tail we have
	\begin{align*}
		&\sup_{v \leq b - 2\sqrt{\zeta b\log b } - 2\zeta\log b} \ \bigl|\pr(V\leq \bar v) - \pr(V \leq v) \bigr|  \\
		&\leq \sup_{v \leq b - 2\sqrt{\zeta b \log b } - 2\zeta\log b} \ \pr(V < (1 + \epsilon_n) v)  + \sup_{v \leq b - 2\sqrt{\zeta b\log b}- 2\zeta\log b} \ \pr(V < v) \\
		&\leq 2\pr\Bigl((V < (1 + \epsilon_n)\{b - 2\sqrt{\zeta b \log b} - 2\zeta\log b \}\Bigr) \\
		&= 2\pr\Bigl(V <  b - (2\sqrt{\zeta} - c/\sqrt{\gamma})\sqrt{b\log b}\Bigr) \leq \frac{2}{b^{(2\sqrt{\zeta} - c/\sqrt{\gamma})^2/4}}.
	\end{align*}
\end{proof}

\bibliographystyle{amsplain}

\end{appendix}

\bigskip\bigskip
\bigskip\bigskip
\bigskip\bigskip

\end{document}